\documentclass[11pt,a4paper]{amsart}
\usepackage{amsmath,amssymb,amsfonts}
\usepackage[all]{xy}
\usepackage{caption}
\usepackage{enumerate}
\usepackage{hyperref} 
\usepackage{bm}
\usepackage{graphicx}
\usepackage{xcolor}
\usepackage{multicol}
\usepackage{tabularx}

\usepackage{soul}

\usepackage{mathpazo}
\usepackage{hyperref}
\usepackage{appendix}
\usepackage{a4wide}

\theoremstyle{plain}
\newtheorem{thm}{Theorem}[section]

\newtheorem{lem}[thm]{Lemma}
\newtheorem{cor}[thm]{Corollary}
\newtheorem{prop}[thm]{Proposition}

\theoremstyle{definition}
\newtheorem{definition}[thm]{Definition}
\newtheorem{ex}[thm]{Example}

\newtheorem{rem}[thm]{Remark}

\makeatletter
\newcommand{\neutralize}[1]{\expandafter\let\csname c@#1\endcsname\count@}
\makeatother



\newcommand{\C}{{\mathbb C}}

\newcommand{\F}{{\mathbb F}}

\newcommand{\Q}{{\mathbb Q}}
\newcommand{\R}{{\mathbb R}}

\newcommand{\Z}{{\mathbb Z}}

\begin{document}
\title{On the Paley graph of a quadratic character}
\dedicatory{Dedicated to Professor Moshe Rosenfeld  on the occasion of his 85th birthday}

\date{}

 \author{ J\'an Min\'a\v{c}, Lyle Muller, Tung T. Nguyen, Nguy$\tilde{\text{\^{E}}}$n Duy T\^{a}n }

\date{\today}

\address{Department of Mathematics, Western University, London, Ontario, Canada N6A 5B7}
\email{minac@uwo.ca}

\address{Department of Mathematics, Western University, London, Ontario, Canada N6A 5B7}
\email{lmuller2@uwo.ca}

\address{Department of Mathematics, Western University, London, Ontario, Canada N6A 5B7}
 \email{tungnt@uchicago.edu}
 
  \address{
 School of Applied  Mathematics and 	Informatics, Hanoi University of Science and Technology, 1 Dai Co Viet Road, Hanoi, Vietnam } 
\email{tan.nguyenduy@hust.edu.vn}

 \keywords{Paley graphs, Ramanujan graphs, Cheeger number, Gauss sums, Special values of $L$-functions.}
\subjclass[2020]{Primary 05C25, 05C50, 11M06}

\thanks{Jan Minac is partially supported by the Natural Sciences and Engineering Research Council of Canada (NSERC) grant R0370A01. Jan Minac also gratefully acknowledges 
Faculty of Sciences Distinguished Research Professorship award for 2020/21. Jan Minac, Lyle Muller and Tung T Nguyen acknowledge the support of the Western Academy for Advanced Research. Nguy$\tilde{\text{\^{e}}}$n Duy T\^{a}n is is partially supported  by Vietnam National University, Hanoi (VNU) under the project number QG.23.48}

\maketitle 

\begin{abstract}

Paley graphs form a nice link between the distribution of quadratic residues and graph theory. These graphs possess remarkable properties which make them useful in several branches of mathematics. Classically, for each prime number $p$ we can construct the corresponding Paley graph using quadratic and non-quadratic residues modulo $p$. Therefore, Paley graphs are naturally associated with the Legendre symbol at $p$ which is a quadratic Dirichlet character of conductor $p$. In this article, we introduce the generalized Paley graphs. These are graphs that are associated with a general quadratic Dirichlet character. We will then provide some of their basic properties. In particular, we describe their spectrum explicitly. We then use those generalized Paley graphs to construct some new families of Ramanujan graphs. Finally, using special values of $L$-functions, we provide an effective upper bound for their Cheeger number. As a by-product of our approach, we settle a question raised in \cite{cramer2016isoperimetric} about the size of this upper bound.

\end{abstract}
\section{Introduction} 
Let $q=p^n$ be a prime power. The Paley graph $P_q$ is the graph with the following data 
\begin{enumerate}
    \item The vertex set of $P_q$ is $\F_q$; here $\F_q$ is the finite field with $q$ elements;
    \item Two vertices $u,v$ are connected if and only $u-v$ a nonzero square in $\F_q$; i.e., $v-u$ is an element of the set 
    \[ S = \{x^2\mid x \in \F_q, x \neq 0 \} .\]
\end{enumerate}
Let us recall the definition of the Caley graph $\Gamma(G,S)$ where $G$ is a group and a set $S$ is a subset of $G$ which has the property that $1 \not \in S$. The vertices of $\Gamma(G,S)$ are elements of $G$ and the edges of $\Gamma(G,S)$ are given by 
\[ E = \{ (g, gs)\mid g \in G, s \in S \} \subset G \times G.\]
(See \cite{cayley1878desiderata, krebs2011expander}). We note that in \cite[Definition 2.3.1]{kowalski2019introduction}, the author requires that $S$ is symmetric in the sense that $s \in S$ if and only if $s^{-1} \in S$ (this symmetric condition guarantees that $\Gamma(G,S)$ is an undirected graph). In the case $G=\F_q$ as an additive group, we see that $P_q$ is exactly the Caley graph $\Gamma(G,S)$ with $S=(\F_q^{\times})^2:=\{x^2\mid x\in \F_q^\times\}$. (Here we use the standard notation $\F_q^\times=\F_q\setminus\{0\}$.) We remark that $-1 \in S$ if and only if $q \equiv 1 \pmod{4}.$ Therefore, $P_{q}$ is an undirected graph for $q \equiv 1 \pmod{4}.$

The origin of Paley graphs can be traced back to Paley's 1933 paper \cite{paley1933orthogonal} in which he implicitly used them to construct Hadamard matrices. These graphs were later rediscovered by Carlitz in \cite{carlitz1960theorem}, though this time in a different context. Gareth A. Jones had the following remark in \cite{jones2020paley} about Paley graphs 
\begin{quotation}
Anyone who seriously studies algebraic graph theory or finite permutation groups will, sooner or later, come across the Paley graphs and their automorphism groups.
\end{quotation}
Due to its arithmetic and representation theoretic nature, we have various tools to study Paley graphs. Consequently, they have interesting properties that make them useful for graph theory and related fields. For example, Paley graphs have found applications in coding and cryptography theory (see \cite{ghinelli2011codes, javelle2014cryptographie}).

In this article, we define and study generalized Paley graphs which are associated with general quadratic characters (we refer the reader to Section \ref{section:paley} for the precise definition of these graphs). The terminology "generalized Paley graphs" was used earlier in some papers of C.E. Praeger and her collaborators. Their generalization of Paley graphs, however, went in another direction from our generalization (see \cite{LimPraeger2009GeneralizedPaley}). Therefore a possible confusion is unlikely. Using the theory of Gauss sums and circulant matrices, we describe explicitly the spectrum of these generalized Paley graphs. We then give a complete answer to the following question: which generalized Paley graphs are Ramanujan? In the final section, we provide an effective upper bound for the Cheeger number of these generalized Paley graphs using special values of $L$-functions. As a by-product of our approach, we settle a question raised in \cite[Section 2.2]{cramer2016isoperimetric} about the size of this upper bound. 

In closing the introduction, we remark that our interest in this topic arises naturally from our recent works on generalized Fekete polynomials (see \cite{MTT3, MTT4}), on the join of circulant matrices (see \cite{CM2, CM1}), and on some new constructions of Ramanujan graphs (see \cite{CM1_b}).

\section{Quadratic characters and the associated generalized Paley graphs} \label{section:paley}
\subsection{Kronecker symbol} 
In this section, we recall the definition of the Kronecker symbol. 

Let $a, n$ be integers. Suppose that $n\not=0$ and $n$ has the following factorization into the product of distinct prime numbers 
\[ n = \text{sgn}(n) p_1^{e_1} p_2^{e_2} \cdots p_r^{e_r} .\] 
Here $\text{sgn}(n)$ is the sign of $n$, which is $1$ if $n>0$ and $-1$ otherwise. Then, the Kronecker symbol $\left(\frac{a}{n} \right)$ is defined as 
\[ \left(\frac{a}{n} \right) = \left(\frac{a}{\text{sgn}(n)} \right) \left(\frac{a}{p_1} \right)^{e_1} \left(\frac{a}{p_2} \right)^{e_2} \cdots \left(\frac{a}{p_r} \right)^{e_r}.\]
Here \[ \left(\frac{a}{-1} \right) = \begin{cases}
  1  &  \text{if } a \geq 0 \\
  -1 &  \text{if } a<0 
\end{cases},  
\quad \left(\frac{a}{2} \right) = \begin{cases}
  0  &  \text{if $2|a$ }  \\
  1 &  \text{if } a \equiv \pm{1} \pmod{8} \\ 
  -1 & \text{if } a \equiv \pm{3} \pmod{8}, 
\end{cases} \] 
and for an odd prime $p$, $\left(\frac{a}{p} \right)$ is the Legendre symbol which is defined as follows 
\[ \left(\frac{a}{p} \right)  = \begin{cases}
   0 & \text{if $p|a$ } \\ 
  1  &  \text{if $a$ is a square modulo p } \\
  -1 &  \text{else. }
\end{cases} \]  
Also, the Kronecker symbol $\left(\frac{a}{0} \right)$ is defined as
\[ \left(\frac{a}{0} \right) = \begin{cases}
  1  &  \text{if } a = \pm{1} \\
  0 &  \text{otherwise. }
\end{cases} \]

We note that, the Kronecker symbol is defined for all integers $a$ and $n$. 
\subsection{Quadratic characters}
We first recall the theory of Dirichlet characters (we refer the reader to \cite[Chapter 1]{ayoub1963introduction} for some further discussions). 
\begin{definition}
    
    Let $m$ be a positive integer. A Dirichlet character mod $m$ is a group homomorphism 
    \[ \chi: (\Z/m)^{\times} \to \C^{\times}.\]

    We say that $\chi$ is primitive if it does not factor through $(\Z/n)^{\times}$ for some proper divisor $n$ of $m.$
\end{definition}
We remark that we can extend a Dirichlet character to an arithmetic function $\chi: \Z \to \C$ by the following convention 
\[ \chi(a) = \begin{cases}
  \chi( a \mod{m})  &  \text{if } \gcd(a,m)=1 \\
  0 &  \text {else.} 
\end{cases} \] 

In this article, we will focus on quadratic characters. These characters are closely related to the theory of quadratic fields and Kronecker symbols which we know recall. Let $m$ be a squarefree integer. Let $\Delta$ be the discriminant of the quadratic extension $\Q(\sqrt{m})/\Q$, which is given by 
\[ \Delta = \begin{cases}
  m  &  \text{if } m \equiv 1 \pmod{4} \\
  4m &  \text{if } m \equiv 2, 3 \pmod{4}.
\end{cases} \] 
From this definition, we can see that $\Delta$ necessarily determines the quadratic extension $\Q(\sqrt{m})/\Q.$ We have the following theorem.
\begin{thm} (\cite[Theorem 9.13]{montgomery2007multiplicative})
Let $\chi_{\Delta}\colon \Z \to \mathbb{C}^{\times}$ be the function given by 
\[ \chi_{\Delta}(a) = \left(\frac{\Delta}{a} \right) ,\] 
where $\left(\frac{\Delta}{a} \right)$ is the Kronecker symbol. Then $\chi_{\Delta}$ is a primitive quadratic character of conductor $D=|\Delta|.$ Furthermore, every primitive quadratic character is given uniquely this way. 
\end{thm}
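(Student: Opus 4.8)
The plan is to verify directly that $\chi_\Delta$ is a primitive quadratic Dirichlet character of conductor $D=|\Delta|$, and then to establish the converse by a local classification of quadratic characters modulo a prime power. The one genuinely external input is quadratic reciprocity together with its two supplementary laws; the rest is bookkeeping of signs and of the contribution at $2$.

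\emph{Step 1: reduction to prime discriminants.} Since $\Delta$ is a fundamental discriminant, I would first write $\Delta=\Delta_1\Delta_2\cdots\Delta_r$ as a product of \emph{prime discriminants}, where each $\Delta_i$ lies in $\{-4,\,8,\,-8\}\cup\{\,p^\ast:=(-1)^{(p-1)/2}p \ \mid\ p \text{ an odd prime}\,\}$, at most one even factor occurs, and the odd primes appearing are distinct. Such a factorization exists and is unique precisely because the congruence conditions defining a fundamental discriminant ($\Delta\equiv 1\pmod 4$ squarefree, or $\Delta=4m$ with $m\equiv 2,3\pmod 4$ squarefree) match exactly the possible products of prime discriminants. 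Then $D=|\Delta|=\prod_i|\Delta_i|$ with $|\Delta_i|\in\{4,8,p\}$, and since the Kronecker symbol is completely multiplicative in its lower entry, $\chi_\Delta(a)=\prod_i\left(\frac{\Delta_i}{a}\right)$.

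\emph{Step 2: each prime-discriminant factor is a primitive quadratic character.} For $\Delta_i=p^\ast$, quadratic reciprocity in the form $\left(\frac{p^\ast}{n}\right)=\left(\frac{n}{p}\right)$ (for $n\ge 1$, with the Jacobi symbol on the right, and checked also at $n=-1$ via the definition of $\left(\frac{\cdot}{-1}\right)$) identifies $\left(\frac{p^\ast}{\cdot}\right)$ with the Legendre-symbol character modulo $p$, which is primitive quadratic of conductor $p$. For $\Delta_i=-4$, the supplement $\left(\frac{-4}{a}\right)=\left(\frac{-1}{a}\right)$ gives the non-trivial character modulo $4$; for $\Delta_i=\pm 8$, the supplements for $\left(\frac{2}{a}\right)$ and $\left(\frac{-1}{a}\right)$ give the two primitive quadratic characters of conductor $8$. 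This is the step where reciprocity is really used, and I expect it to be the main obstacle: one must verify that each $\left(\frac{\Delta_i}{\cdot}\right)$ is genuinely periodic with the claimed period, which requires carefully reconciling the ad hoc definitions of the Kronecker symbol at $2$ and at $-1$ with the reciprocity laws. Multiplying the factors: $\chi_\Delta$ is periodic modulo $\prod_i|\Delta_i|=D$; it is quadratic since each factor is; it vanishes at $a$ exactly when some prime dividing $a$ divides $D$, i.e. exactly when $\gcd(a,D)>1$; and it is primitive because a product of primitive characters of pairwise coprime conductors is primitive, with conductor the product of the conductors. Hence $\chi_\Delta$ is a primitive quadratic character of conductor $D$.

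\emph{Step 3: every primitive quadratic character arises uniquely this way.} Given a primitive quadratic character $\chi$ modulo $m$, factor $m=\prod_j q_j$ into prime powers and correspondingly $\chi=\prod_j\chi_j$ with $\chi_j$ primitive modulo $q_j$ and again quadratic. I would then classify primitive quadratic characters modulo a prime power using the structure of $(\Z/q)^\times$: for an odd prime power $p^k$ the group is cyclic, so there is a unique character of order dividing $2$, and it is primitive only for $k=1$, where it equals $a\mapsto\left(\frac{a}{p}\right)=\left(\frac{p^\ast}{a}\right)$; for $2^k$, inspection of $(\Z/2)^\times=1$, $(\Z/4)^\times\cong\Z/2$, $(\Z/8)^\times\cong(\Z/2)^2$, and $(\Z/2^k)^\times\cong\Z/2\times\Z/2^{k-2}$ for $k\ge 3$ shows that a primitive quadratic character exists only for $k=2$ (namely $\chi_{-4}$) and $k=3$ (namely $\chi_{8}$ and $\chi_{-8}$). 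Therefore $\chi$ is a product of pieces of the form $\left(\frac{p^\ast}{\cdot}\right)$, $\left(\frac{-4}{\cdot}\right)$, $\left(\frac{\pm 8}{\cdot}\right)$ with distinct underlying primes; setting $\Delta$ to be the product of the corresponding prime discriminants yields $\chi=\chi_\Delta$, and this $\Delta$ is automatically a fundamental discriminant by the congruence check of Step 1. Uniqueness is then immediate: the prime-power factorization of $m=|\Delta|$ together with the restriction of $\chi$ to each factor recovers each prime discriminant $\Delta_i$, hence $\Delta$ itself.
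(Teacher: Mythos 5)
The paper does not prove this statement at all: it is imported verbatim from Montgomery--Vaughan \cite[Theorem 9.13]{montgomery2007multiplicative}, so there is no in-paper argument to compare against. Judged on its own, your proof is correct in outline and is essentially the standard one (and in fact the one given in the cited reference): factor the fundamental discriminant uniquely into prime discriminants $-4$, $\pm 8$, $p^\ast=(-1)^{(p-1)/2}p$; use quadratic reciprocity and its supplements to identify $\bigl(\tfrac{p^\ast}{\cdot}\bigr)$ with the Legendre character mod $p$ and the even prime discriminants with the primitive quadratic characters of conductors $4$ and $8$; multiply, using that a product of primitive characters to pairwise coprime moduli is primitive of conductor the product; and for the converse, decompose a primitive quadratic character via CRT into local components and classify primitive quadratic characters modulo prime powers (only $p$, $4$, $8$ occur). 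You correctly flag the one point that genuinely requires care, namely that the reciprocity identity $\bigl(\tfrac{p^\ast}{n}\bigr)=\bigl(\tfrac{n}{p}\bigr)$ must be checked for all integers $n$, including negative and even $n$, so that each factor really is periodic with the claimed period; together with the observation (implicit in your Step 3) that each local component of a primitive character is nontrivial, this closes the argument. Your write-up would serve as an actual proof where the paper only offers a citation.
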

\subsection{Generalized Paley graphs and their spectra}
Let $\chi = \chi_{\Delta}$ be the primitive quadratic character of conductor $D = |\Delta|$ as explained in the previous section. We introduce the following definition (see 
also \cite{budden2017dirichlet} for a similar approach).
\begin{definition} 
The generalized Paley graph $P_{\Delta}$ is the graph with the following data 
\begin{enumerate}
    \item The vertices of $P_{\Delta}$ are $\{0, 1, \ldots, D-1 \}.$
    \item Two vertices $(u,v)$ are connected iff $\chi_{\Delta}(v-u)=1.$
\end{enumerate}
\end{definition}
\begin{ex}
We refer to Figure \ref{fig:2figsA} and Figure \ref{fig:2figsB} for two concrete examples of generalized Paley graphs. 
\end{ex}
\begin{figure}[!htb]
\parbox{8cm}{
\includegraphics[scale = 0.3]{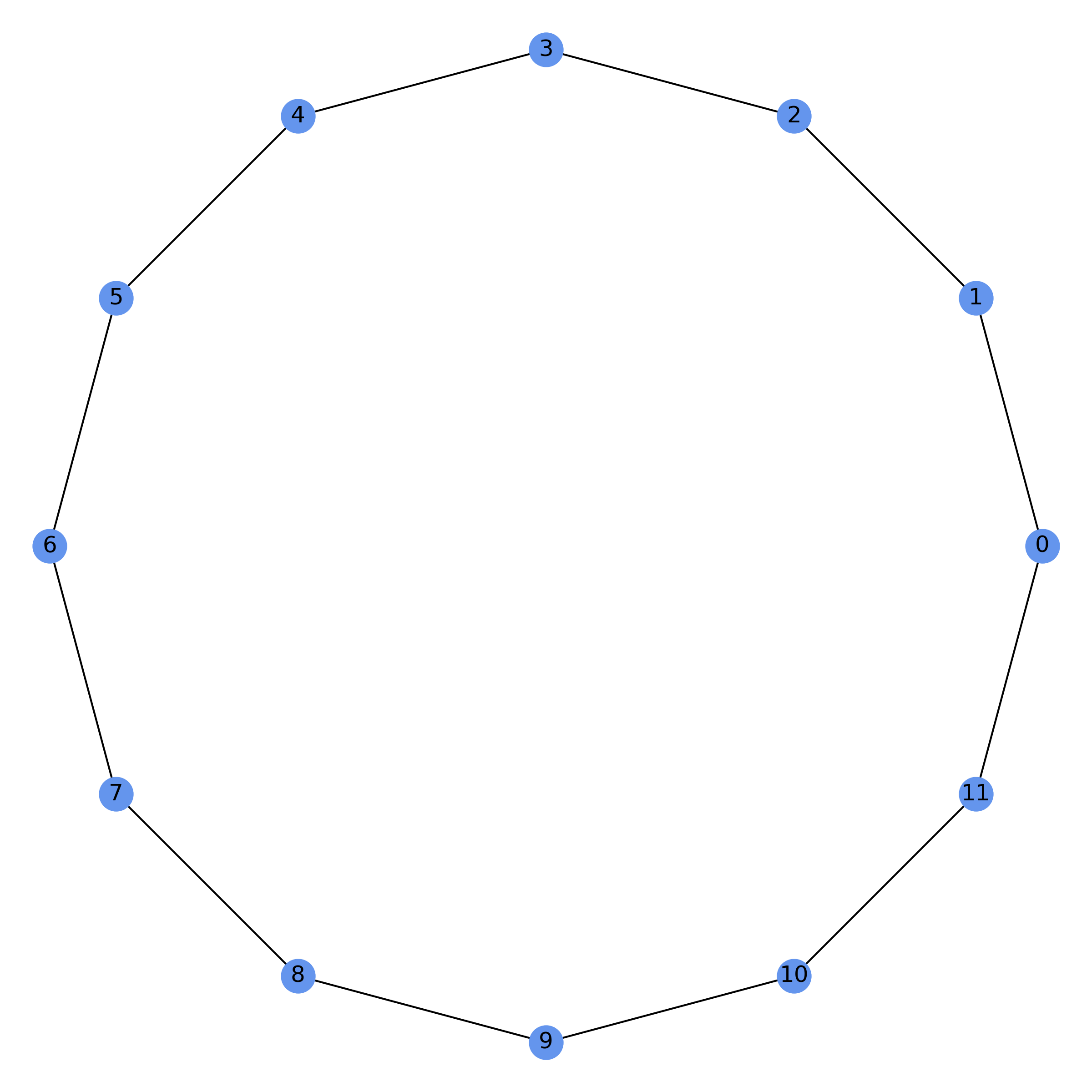}
\caption{The generalized Paley graph $P_{12}$}
\label{fig:2figsA}}
\qquad \quad
\begin{minipage}{6cm}
\includegraphics[scale = 0.3]{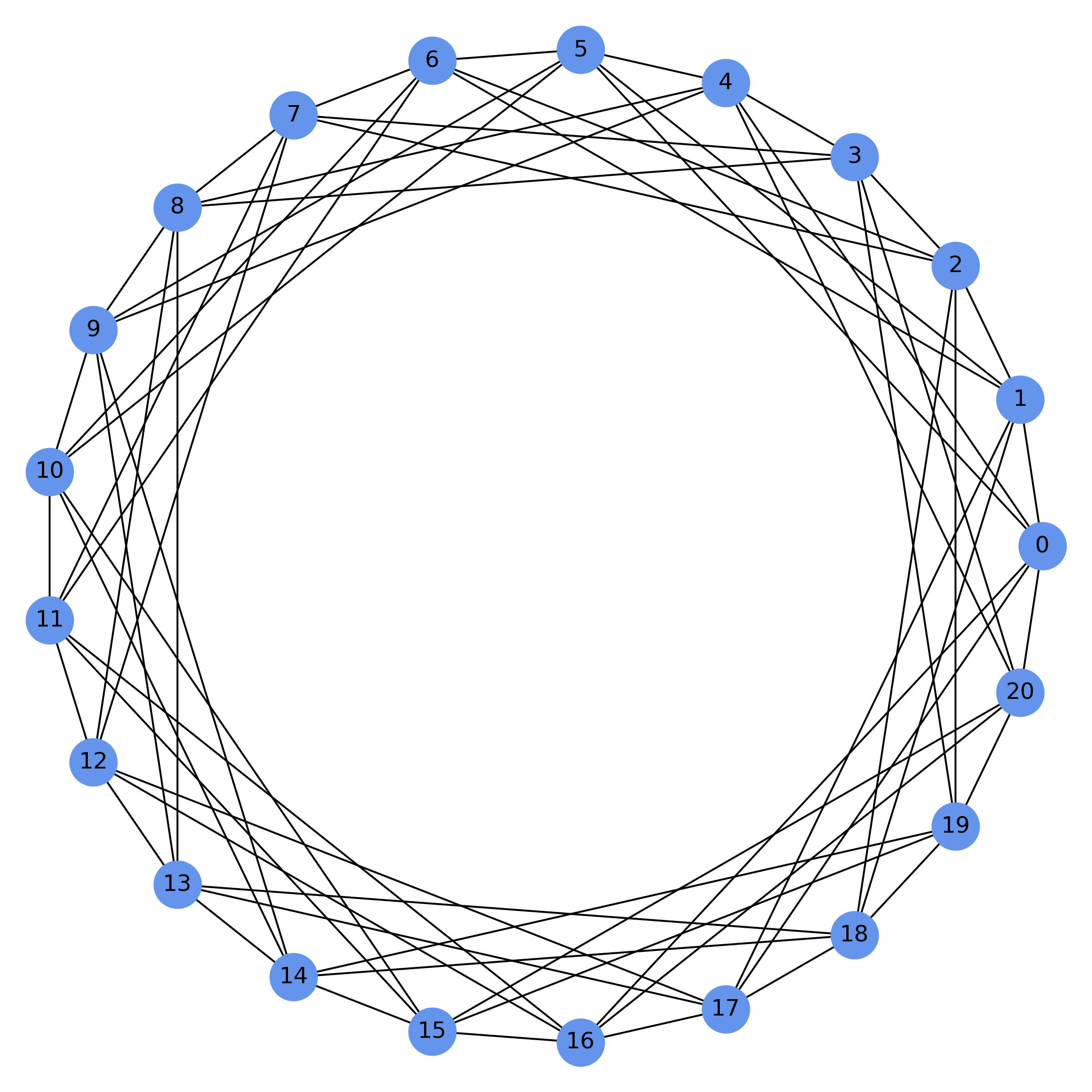}
\caption{The generalized Paley graph $P_{21}$.}
\label{fig:2figsB}
\end{minipage}
\end{figure}

We remark that if $\Delta<0$, then $\chi_{\Delta}(-1)=-1.$ In this case, $P_{\Delta}$ is a directed graph. Otherwise, if $\Delta>0$ then $\chi_{\Delta}(-1)=1$ and therefore, $P_{\Delta}$ is an undirected graph.

Since the connection in $P_{\Delta}$ is determined by $(v-u) \mod{D}$, we conclude that $P_{\Delta}$ is a circulant graph with respect to the cyclic group $\Z/D$ (see \cite{CM2, davis2013circulant}). In fact, its adjacency matrix is generated by the following vector
\[ v = \left[\frac{1}{2} \chi(a) (\chi(a)+1) \right]_{0 \leq a \leq D-1} .\] 
This follows from the fact that 
\[ \frac{1}{2} \chi(a) (1+\chi(a)) = \begin{cases}
  1  &  \text{ if } \chi(a) = 1 \\
  0 &  \text{ else.}
\end{cases} \]

We first observe the following.
\begin{prop} \label{prop:degree}
$P_{\Delta}$ is a regular graph of degree $\frac{1}{2}\varphi(D).$
\end{prop}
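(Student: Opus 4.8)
The plan is to reduce the claim to a counting statement about the character $\chi=\chi_\Delta$. Since the adjacency of $u$ and $v$ in $P_\Delta$ depends only on $a:=v-u\bmod D$, and since $\chi(a)\in\{0,\pm1\}$ with $\tfrac12\chi(a)(1+\chi(a))=1$ exactly when $\chi(a)=1$, the out-degree of a vertex $u$ is
\[
\#\{v\in\Z/D : \chi(v-u)=1\}=\#\{a\in\Z/D : \chi(a)=1\},
\]
which is manifestly independent of $u$; similarly, writing $a=v-u$ and letting $u$ range, the in-degree of any $v$ equals the same quantity. (When $\Delta>0$ the graph is undirected and this common value is the degree; when $\Delta<0$ one notes in-degree equals out-degree, so $P_\Delta$ is regular in the usual sense.) Thus it suffices to show
\[
\#\{a\in\{0,1,\dots,D-1\} : \chi(a)=1\}=\tfrac12\varphi(D).
\]

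First I would recall that $\chi(a)=0$ precisely when $\gcd(a,D)>1$, so the count above only involves the $\varphi(D)$ residues in $(\Z/D)^\times$. On this group $\chi$ is a homomorphism into $\{\pm1\}$, and because $\chi$ is the \emph{primitive} quadratic character of conductor $D$ (with $D=|\Delta|\ge 3$), it is nontrivial; hence $\chi\colon(\Z/D)^\times\to\{\pm1\}$ is surjective. Therefore its kernel $\{a\in(\Z/D)^\times : \chi(a)=1\}$ is a subgroup of index $2$, so it has exactly $\tfrac12\varphi(D)$ elements, which is what we wanted.

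There is essentially no serious obstacle here: the only point requiring a word of care is the surjectivity of $\chi$ onto $\{\pm1\}$, i.e.\ that $\chi$ is nontrivial on $(\Z/D)^\times$. This is exactly where primitivity of $\chi_\Delta$ with conductor $D$ enters — if $\chi$ were trivial it would factor through $(\Z/1)^\times$, contradicting that its conductor is $D>1$. Once this is in hand, the index-$2$ argument finishes the proof, and the formula $\deg P_\Delta=\tfrac12\varphi(D)$ follows immediately.
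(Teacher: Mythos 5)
Your proof is correct and amounts to the same argument as the paper's: both reduce the degree to counting the residues $a$ with $\chi(a)=1$ and then use the nontriviality of the primitive character $\chi_\Delta$ to see that these are exactly half of the $\varphi(D)$ units. The only cosmetic difference is that you phrase this via the index-$2$ kernel of $\chi\colon(\Z/D)^\times\to\{\pm1\}$, whereas the paper evaluates $\sum_{a}\chi(a)(1+\chi(a))$ using the orthogonality relation $\sum_a\chi(a)=0$ --- two faces of the same fact.
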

\begin{proof}
We have 
\begin{align*}
2 \deg (P_{\Delta}) &= \sum_{a=0}^{D-1}  \chi(a)[1+\chi(a)]  = \sum_{a=0}^{D-1} \chi(a) + \sum_{a=0}^{D-1} \chi^2(a) \\
&= 0 + \sum_{0 \leq a \leq D-1, \gcd(a,D)=1} 1 = \varphi(D).
\end{align*}
Here we use the fact that 
\[ \sum_{a=0}^{D-1} \chi(a)=0 .\] 
We conclude that $\deg(P_{\Delta})=\frac{1}{2} \varphi(D).$
\end{proof}

\begin{cor} \label{cor:cycle}
Suppose that $\Delta>0$. Then $P_{\Delta}$ is a cycle graph if and only if $\Delta=5$ or $\Delta=8$ or $\Delta=12.$
\end{cor}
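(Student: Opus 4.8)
The plan is to use the elementary fact that a simple undirected cycle graph is exactly a connected $2$-regular graph, and to translate each of these two conditions into an arithmetic constraint on $\Delta$. Note first that since $\Delta>0$ we have $\chi_\Delta(-1)=1$, so $P_\Delta$ is undirected, and $D=|\Delta|=\Delta$.

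\emph{Reducing to $\varphi(\Delta)=4$.} By Proposition \ref{prop:degree}, $P_\Delta$ is $\tfrac12\varphi(\Delta)$-regular, hence it is $2$-regular if and only if $\varphi(\Delta)=4$. An elementary computation shows that the integers $n$ with $\varphi(n)=4$ are exactly $n\in\{5,8,10,12\}$. However, $\Delta$ is required to be a fundamental discriminant, i.e.\ either $\Delta\equiv 1\pmod{4}$ and squarefree, or $\Delta=4m$ with $m\equiv 2,3\pmod{4}$ squarefree. Among $\{5,8,10,12\}$, the value $\Delta=10$ satisfies neither condition, while $5$, $8=4\cdot 2$, and $12=4\cdot 3$ do. Thus the only $\Delta>0$ for which $P_\Delta$ can be $2$-regular — a fortiori the only candidates for $P_\Delta$ to be a cycle — are $\Delta\in\{5,8,12\}$, which gives the ``only if'' direction.

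\emph{Checking connectivity.} It remains to verify that for $\Delta\in\{5,8,12\}$ the $2$-regular graph $P_\Delta$ is connected, hence a single $D$-cycle $C_D$ rather than a disjoint union of shorter cycles. Recall $P_\Delta=\Gamma(\Z/D,S)$ with $S=\{a\in\Z/D:\chi_\Delta(a)=1\}$, and that a Cayley graph is connected if and only if $S$ generates the group. Since $\chi_\Delta(1)=1$ always, and $\chi_\Delta(-1)=1$ because $\Delta>0$, we get $\{1,-1\}\subseteq S$; as $|S|=\deg(P_\Delta)=\tfrac12\varphi(D)=2$ and $1\neq -1$ in $\Z/D$ for $D\geq 3$, in fact $S=\{1,-1\}$. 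Because $\gcd(1,D)=1$, the set $S$ generates $\Z/D$, so $P_\Delta$ is connected; a connected $2$-regular graph on $D\geq 3$ vertices is $C_D$. This proves the ``if'' direction.

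I do not anticipate a genuine obstacle. The points requiring care are: using ``cycle graph'' consistently as ``connected $2$-regular graph'', so that the equivalence with $\varphi(\Delta)=4$ plus connectivity is exact; correctly discarding $\Delta=10$ by invoking that $\Delta$ must be a fundamental discriminant; and the connectivity step, which conveniently reduces to the observation that the symmetry $\chi_\Delta(-1)=1$ (valid since $\Delta>0$) forces the two-element connection set to be $\{1,-1\}$.
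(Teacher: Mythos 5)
Your proof is correct and follows essentially the same route as the paper's: reduce ``$2$-regular'' to $\varphi(\Delta)=4$ via Proposition \ref{prop:degree}, and for the converse observe that the connection set is $\{1,-1\}$, which forces the graph to be a single $D$-cycle. The only difference is that you are more explicit on two points the paper leaves implicit — discarding $\Delta=10$ from $\{5,8,10,12\}$ because it is not a fundamental discriminant, and verifying connectivity via the Cayley-graph generation criterion — both of which are welcome but do not change the argument.
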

\begin{proof}
Suppose that $P_{\Delta}$ is a cycle graph. Then the degree of $P_{\Delta}$ is $2$. By Proposition \ref{prop:degree}, we must have $\varphi(D)=4.$ Therefore $\Delta = D \in \{5,8, 12 \}.$ Conversely, if $\Delta \in \{5,8,12\}$, then $\chi(a)=1$ if and if $a = \pm{1}.$ Consequently, an edge in $P_{\Delta}$ must have the form $(u, u+1)$ or $(u, u-1)$ for $u \in \Z/D.$ In other words, $P_{\Delta}$ is a cycle graph.
\end{proof}

\section{The spectrum of $P_{\Delta}$}
In this section, we compute the spectrum of $P_{\Delta}$. By the Circulant Diagonalization Theorem (see \cite{davis2013circulant}), the spectrum of $P_{\Delta}$ is given by 
\[ \left \{ \lambda(\omega):= \frac{1}{2} \sum_{a=0}^{D-1} \chi(a) (1+\chi(a)) \omega^a \right\} ,\]
where $\omega$ runs over the set of all $D$th roots of unity. Note that for each $\omega$, there exists a unique positive integer $d \vert D$ such that $\omega$ is a primitive $d$th  root of unity. We first recall the definition of the M\"{o}bius function
\begin{definition}
The M\"{o}bius function $\mu(n)$ is defined as follow
\[ \mu(n) = \begin{cases}
 0  \text{ if $n$ has one or more repeated prime factors} \\
       1  \text{ if  $n=1$}  \\
        (-1)^k  \text{ if $n$ is a product of $k$ distinct primes}.
\end{cases} .\]  
\end{definition}
We also recall Ramanujan's sum (see \cite[Section 5.6]{hardy1979introduction}). Let $m,n$ be two positive integers. The Ramanujan sum $c_{n}(m)$ is defined as follow
\[ c_n(m)=\sum_{\substack{1 \leq a \leq n \\ \gcd(a,n)=1}} {\rm exp}(\frac{2\pi i ma}{n}). \]

If $\gcd(n,m)=a$ and $n=aN$ then by \cite[Theorem 272]{hardy1979introduction}, we have
\[ c_n(m) = \frac{\mu(N) \varphi(n)}{\varphi(N)} .\]
In particular, when $m=1$ and $n=d$, we have (see \cite[Equation 16.4.4]{hardy1979introduction})

\begin{lem} 
Let $d$ be a positive integer. Let $\omega$ be a primitive $d$th root of unity. Then 
\[ \sum_{\substack{1 \leq i \leq d\\ \gcd(i,d)=1}} \omega^i = \mu(d) .\] 

In other words, the sum of all primitive $d$th roots of unity is equal to $\mu(d).$
\end{lem}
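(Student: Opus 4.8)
The plan is to reduce the statement to the classical evaluation $c_d(1)=\mu(d)$ of the Ramanujan sum, which has just been recorded in the excerpt as a consequence of \cite[Theorem 272]{hardy1979introduction}.

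First I would write $\omega = e^{2\pi i k/d}$ for a (unique) integer $k$ with $1\le k\le d$ and $\gcd(k,d)=1$; such a $k$ exists precisely because $\omega$ is a \emph{primitive} $d$th root of unity. Then
\[
\sum_{\substack{1\le i\le d\\ \gcd(i,d)=1}}\omega^i \;=\; \sum_{\substack{1\le i\le d\\ \gcd(i,d)=1}} e^{2\pi i ki/d} \;=\; c_d(k).
\]
Since $\gcd(k,d)=1$, applying the formula $c_n(m)=\mu(N)\varphi(n)/\varphi(N)$ with $n=d$ and $m=k$ — so that $a=\gcd(d,k)=1$ and $N=d$ — yields $c_d(k)=\mu(d)\varphi(d)/\varphi(d)=\mu(d)$, which is the claim. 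The conceptual reason this works is that multiplication by $k$ permutes the set $\{\,i : 1\le i\le d,\ \gcd(i,d)=1\,\}$ modulo $d$, so $\sum_{\gcd(i,d)=1}\omega^i$ is independent of which primitive $d$th root of unity $\omega$ we pick, and one may as well take $\omega=e^{2\pi i/d}$ and invoke $c_d(1)=\mu(d)$ (i.e.\ \cite[Equation 16.4.4]{hardy1979introduction}).

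For completeness I would also indicate a self-contained argument avoiding the Ramanujan-sum identity. Set $f(e):=\sum_{1\le j\le e,\ \gcd(j,e)=1} e^{2\pi i j/e}$. Partitioning $\{0,1,\dots,d-1\}$ according to the value $d/e$ of $\gcd(j,d)$ and substituting $j=(d/e)j'$ identifies each class with the reduced residues modulo $e$, giving $\sum_{j=0}^{d-1} e^{2\pi i j/d}=\sum_{e\mid d} f(e)$. The left-hand side equals $1$ when $d=1$ and $0$ otherwise (a geometric series), so Möbius inversion delivers $f(d)=\sum_{e\mid d}\mu(d/e)\,[e=1]=\mu(d)$.

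The statement is elementary, so there is no real obstacle; the only point requiring a moment's care is that $\omega$ is an arbitrary primitive $d$th root of unity rather than the distinguished one $e^{2\pi i/d}$, which is exactly what the action of $(\Z/d)^\times$ on itself by multiplication takes care of.
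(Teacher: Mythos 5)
Your proposal is correct and follows essentially the same route as the paper: the evaluation via the Ramanujan-sum formula $c_d(k)=\mu(N)\varphi(d)/\varphi(N)$ with $\gcd(k,d)=1$, together with the self-contained divisor-partition and M\"obius-inversion argument, which is exactly the sketch the authors give. Your one small addition --- explicitly noting that multiplication by $k$ permutes the reduced residues, so the sum is independent of which primitive $d$th root of unity $\omega$ is chosen --- is a point the paper passes over silently, and it is worth keeping.
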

We recall the theory of Gauss sum. Let $\zeta_{D}=\exp \left(\frac{2 \pi i}{D} \right)$ be a primitive $D$th root of unity. 
\begin{definition} (\cite[Chapter V]{ayoub1963introduction}) 
The Gauss sum $G(b, \chi)$ is defined as follow 
\[ G(b, \chi)=\sum_{a=1}^{D-1} \chi(a) \zeta_{D}^{ab} .\] 
\end{definition}
We recall the following fundamental property \cite[Theorem 4.12, page 312]{ayoub1963introduction}
\[
G(b,\chi)=\chi(b) G(1,\chi).
\]
Furthermore, by \cite[Theorem 4.17]{ayoub1963introduction}, we have $ G(1, \chi) = \sqrt{\Delta}$. Consequently $G(b,\chi)=\chi(b) \sqrt{\Delta}$.  In particular, if $\omega$ is not a primitive $D$th root of unity then 
\[ \sum_{a=1}^{D-1} \chi(a) \omega^{a} = 0.\]  
On the other hand, if $\omega$ is a primitive $D$th root of unity, namely $\omega = \zeta_{D}^b$ with $\gcd(b,D)=1$ then 
\[ \sum_{a=0}^{D-1} \chi(a) \omega^a  = \chi(b) \sqrt{\Delta} .\] 

In summary, we have the following lemma.
\begin{lem}
Let $\omega$ be a $D$th root of unity. 
\begin{enumerate}
    \item If $\omega$ is not a primitive $D$th root of unity then 
    \[ \sum_{a=1}^{D-1} \chi_{\Delta}(a) \omega^{a} = 0 .\] 
    \item If $\omega=\zeta_{D}^b$ with $\gcd(b,D)=1$ then 
    \[ \sum_{a=1}^{D-1} \chi_{\Delta}(a) \omega^{a} = \chi(b) \sqrt{\Delta} .\]

\end{enumerate}
\end{lem}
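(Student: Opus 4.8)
The plan is to deduce both parts directly from the Gauss sum facts recalled just above. First I would write an arbitrary $D$th root of unity as $\omega=\zeta_{D}^{b}$ for the unique integer $b$ with $0\le b\le D-1$, so that, straight from the definition of the Gauss sum,
\[
\sum_{a=1}^{D-1}\chi_{\Delta}(a)\,\omega^{a}\;=\;\sum_{a=1}^{D-1}\chi_{\Delta}(a)\,\zeta_{D}^{ab}\;=\;G(b,\chi).
\]
The only structural observation needed is that $\zeta_{D}^{b}$ has order $D/\gcd(b,D)$, hence $\omega$ is a primitive $D$th root of unity if and only if $\gcd(b,D)=1$.

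Next I would apply the fundamental property $G(b,\chi)=\chi(b)\,G(1,\chi)$, valid for the primitive character $\chi=\chi_{\Delta}$ of conductor $D$, together with $G(1,\chi)=\sqrt{\Delta}$. Combining these gives
\[
\sum_{a=1}^{D-1}\chi_{\Delta}(a)\,\omega^{a}\;=\;\chi(b)\sqrt{\Delta}
\]
for every $b$. Part (2) is then immediate, since there $\gcd(b,D)=1$, so $\omega=\zeta_{D}^{b}$ with $\chi(b)=\pm 1$. For part (1), $\omega$ being non-primitive means $\gcd(b,D)>1$, whence $\chi(b)=0$ by the convention extending $\chi$ to all of $\Z$, and the sum vanishes; the special case $b=0$, i.e. $\omega=1$, simply recovers the identity $\sum_{a=1}^{D-1}\chi(a)=0$ already used in the proof of Proposition \ref{prop:degree}.

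I do not expect a genuine obstacle here: the statement is essentially a bookkeeping consequence of the cited properties of $G(b,\chi)$. The one point worth a sentence of care is that the identity $G(b,\chi)=\chi(b)G(1,\chi)$ is being invoked for composite $b$ as well, where both sides collapse to $0$. A reader wanting a fully self-contained argument for part (1) can instead, when $\gcd(b,D)=g>1$, use primitivity of $\chi$ to choose an integer $c$ coprime to $D$ with $c\equiv 1\pmod{D/g}$ and $\chi(c)\neq 1$, and observe that the substitution $a\mapsto ca$ permutes the residues and fixes $\zeta_{D}^{cb}=\zeta_{D}^{b}$, so it multiplies the sum by $\chi(c)$ and therefore forces it to be zero.
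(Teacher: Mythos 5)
Your argument is correct and is essentially the paper's own: both write $\omega=\zeta_D^b$, identify the sum with the Gauss sum $G(b,\chi)$, and invoke $G(b,\chi)=\chi(b)G(1,\chi)=\chi(b)\sqrt{\Delta}$ from \cite[Theorems 4.12 and 4.17]{ayoub1963introduction}, with part (1) following because $\chi(b)=0$ when $\gcd(b,D)>1$. Your closing remark supplying the substitution $a\mapsto ca$ argument for the non-primitive case is a nice self-contained justification of the separability identity for composite $b$, but it is not a departure from the paper's route.
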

We can now simplify $\lambda(\omega).$ In fact 
\begin{align*}
    2 \lambda(\omega) &= \sum_{a=0}^{D-1} \chi(a) \omega^a + \sum_{a=0}^{D-1} \chi^2(a) \omega^a \\
    &= \sum_{a=0}^{D-1} \chi(a) \omega^a + \sum_{\substack{0 \leq a \leq D-1\\ \gcd(a,D)=1} } \omega^a.
\end{align*}
We consider two cases. \\
\textbf{Case 1.} $\omega$ is a primitive $d$th root of unity with $d<D.$ In this case 
\[ \sum_{a=0}^{D-1} \chi(a) \omega^a = 0 ,\]
and 

\[\sum_{\substack{0 \leq a \leq D-1\\ \gcd(a,D)=1} } \omega^a = \frac{\varphi(D)}{\varphi(d)} \mu(d) .\] 
Therefore 
\[ \lambda(\omega) = \frac{1}{2} \frac{\varphi(D)}{\varphi(d)} \mu(d) .\] 
\textbf{Case 2.} $\omega = \zeta_{D}^{b}$ is a primitive $D$th root of unity; namely $\omega = \zeta_{D}^b$ with $\gcd(b,D)=1$. In this case, we have 

\[\sum_{a=0}^{D-1} \chi(a) \omega^a = \chi(b) \sqrt{D}, \]
and 

\[ \sum_{\substack{0 \leq a \leq D-1\\\gcd(a,D)=1} } \omega^a = \mu(D). \]
Consequently 
\[ \lambda(\omega) = \frac{1}{2} \left[\chi(b) \sqrt{\Delta} + \mu(D) \right] .\] 
Let us write $[a]_b$ for the multiset $\{ \underbrace{a, \ldots, a}_{\text{$b$ times}} \}$. By the above calculations, we have the following conclusion. 
\begin{thm}
\label{thm:spectrum_Paleygraph}
The spectrum of the generalized Paley graph $P_{\Delta}$ is the union of the following multisets 
\[ \left[\frac{1}{2} \frac{\varphi(D)}{\varphi(d)} \mu(d) \right]_{\varphi(d)} \quad \text{for } d|D \quad \text{and } d<D, \]
and 
\[  \left[  \frac{1}{2} (\sqrt{\Delta} + \mu(D)) \right]_{\frac{\varphi(D)}{2}}, \] 
and 
\[ \left[  \frac{1}{2}(-\sqrt{\Delta} + \mu(D)) \right]_{\frac{\varphi(D)}{2}}. \]

\end{thm}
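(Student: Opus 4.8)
The plan is to apply the Circulant Diagonalization Theorem to the generating vector $v=\left[\frac{1}{2}\chi(a)(\chi(a)+1)\right]_{0\le a\le D-1}$, so that the spectrum consists of the numbers $\lambda(\omega)=\frac{1}{2}\sum_{a=0}^{D-1}\chi(a)(1+\chi(a))\omega^a$ as $\omega$ runs over all $D$th roots of unity, and then to evaluate each $\lambda(\omega)$ by organizing the $D$th roots of unity according to their exact order $d\mid D$. The starting point is the splitting
\[
2\lambda(\omega)=\sum_{a=0}^{D-1}\chi(a)\omega^a+\sum_{\substack{0\le a\le D-1\\ \gcd(a,D)=1}}\omega^a,
\]
which isolates a Gauss-type sum and a Ramanujan-type sum; these are handled respectively by the identity $G(b,\chi)=\chi(b)\sqrt{\Delta}$ recalled above and by the Ramanujan sum formula $c_D(k)=\mu(N)\varphi(D)/\varphi(N)$ with $N=D/\gcd(k,D)$ (equivalently, by the lemma on sums of primitive roots of unity).

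First I would treat an $\omega$ of exact order $d<D$. Since such an $\omega$ is not a primitive $D$th root of unity, the Gauss sum discussion gives $\sum_{a=0}^{D-1}\chi(a)\omega^a=0$; writing $\omega=\zeta_D^k$ with $\gcd(k,D)=D/d$, the second sum is $c_D(k)=\mu(d)\varphi(D)/\varphi(d)$, so $\lambda(\omega)=\frac{1}{2}\mu(d)\varphi(D)/\varphi(d)$. As there are exactly $\varphi(d)$ primitive $d$th roots of unity, this contributes the multiset $\left[\frac{1}{2}\mu(d)\varphi(D)/\varphi(d)\right]_{\varphi(d)}$ for each proper divisor $d$ of $D$.

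Next I would treat $\omega=\zeta_D^b$ with $\gcd(b,D)=1$. Here the Gauss sum identity gives $\sum_{a=0}^{D-1}\chi(a)\omega^a=\chi(b)\sqrt{\Delta}$, while the second sum is $c_D(b)=\mu(D)$, hence $\lambda(\omega)=\frac{1}{2}\bigl(\chi(b)\sqrt{\Delta}+\mu(D)\bigr)$. The only point that is more than bookkeeping is to count how often $\chi(b)=1$ versus $\chi(b)=-1$ as $b$ runs over $(\Z/D)^{\times}$: since $\chi=\chi_\Delta$ is a nontrivial quadratic character (using $D=|\Delta|>1$), it is a surjective homomorphism $(\Z/D)^{\times}\to\{\pm1\}$ whose kernel has index $2$, so each value is attained exactly $\varphi(D)/2$ times. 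This yields the two multisets $\left[\frac{1}{2}(\sqrt{\Delta}+\mu(D))\right]_{\varphi(D)/2}$ and $\left[\frac{1}{2}(-\sqrt{\Delta}+\mu(D))\right]_{\varphi(D)/2}$.

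Finally I would assemble the three pieces and, as a sanity check, verify that the total multiplicity is $\sum_{d\mid D,\,d<D}\varphi(d)+2\cdot\frac{1}{2}\varphi(D)=\sum_{d\mid D}\varphi(d)=D$, matching the number of vertices of $P_\Delta$ (and noting that for $\Delta<0$ the values $\frac{1}{2}(\pm\sqrt{\Delta}+\mu(D))$ are genuinely complex, consistent with $P_\Delta$ being directed). I do not expect any serious obstacle: all the analytic input — the value $G(1,\chi)=\sqrt{\Delta}$ and the Ramanujan sum formula — has already been recorded, so the main care needed is the organizational one of grouping the $D$th roots of unity by exact order and correctly identifying the relevant Ramanujan sum in terms of that order.
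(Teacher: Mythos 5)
Your proposal is correct and follows essentially the same route as the paper: diagonalize the circulant adjacency matrix, split $2\lambda(\omega)$ into the Gauss-sum term and the Ramanujan-sum term, and evaluate each according to whether $\omega$ has exact order $d<D$ or is a primitive $D$th root of unity. Your explicit counting of how often $\chi(b)=\pm1$ occurs and the total-multiplicity sanity check $\sum_{d\mid D}\varphi(d)=D$ are welcome details that the paper leaves implicit.
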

We discuss a corollary of this theorem. First, we recall that an undirected graph $G=(V,E)$ is called a bipartite graph if  $V(G)$ can be decomposed into two disjoint sets such that no two vertices within the same set are adjacent. As explained in \cite{murty2003}, if $G$ is regular of degree $r$ then $G$ is a bipartite graph if and only if $-r$ is an eigenvalue of $G.$

\begin{cor}
Suppose that $\Delta>0$. Then, the generalized Paley graph $P_{\Delta}$ is a bipartite graph if  and only if $\Delta$ is even. 
\end{cor}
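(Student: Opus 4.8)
The plan is to use the criterion stated just before the corollary: since $P_{\Delta}$ is regular of degree $r=\tfrac12\varphi(D)$ by Proposition~\ref{prop:degree}, and (for $\Delta>0$) undirected, it is bipartite if and only if $-r=-\tfrac12\varphi(D)$ appears in the spectrum computed in Theorem~\ref{thm:spectrum_Paleygraph}. So the entire argument reduces to checking when $-\tfrac12\varphi(D)$ is an eigenvalue, using the explicit description of the spectrum.

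First I would examine the three families of eigenvalues in Theorem~\ref{thm:spectrum_Paleygraph}. The eigenvalues $\tfrac12(\pm\sqrt{\Delta}+\mu(D))$ have absolute value at most $\tfrac12(\sqrt{\Delta}+1)$, which is strictly less than $\tfrac12\varphi(D)$ once $D$ is not tiny (and for the small cases $D\in\{5,8,12\}$, where $P_\Delta$ is a cycle by Corollary~\ref{cor:cycle}, one checks directly that $-r=-2$ occurs iff the cycle has even length, i.e. iff $D=8$ or $D=12$, which are exactly the even $\Delta$). So the interesting eigenvalue $-\tfrac12\varphi(D)$ can only come from the first family, i.e. from some proper divisor $d\mid D$, $d<D$, with $\tfrac12\frac{\varphi(D)}{\varphi(d)}\mu(d)=-\tfrac12\varphi(D)$. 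This forces $\varphi(d)=1$ and $\mu(d)=-1$, hence $d=2$. Thus $-\tfrac12\varphi(D)$ is an eigenvalue of $P_\Delta$ \emph{if and only if} $2\mid D$ (so that $d=2$ is an admissible proper divisor), equivalently iff $\Delta$ is even — recalling that $\Delta=D$ here since $\Delta>0$, and $\Delta$ even is the same as $D=|\Delta|$ even.

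The one genuine subtlety is the boundary between "large $D$" and the small exceptional values: I should make the inequality $\tfrac12(\sqrt{\Delta}+1)<\tfrac12\varphi(D)$ precise, i.e. $\sqrt{D}+1<\varphi(D)$, which holds for all $D\ge 13$ or so, and then dispatch the finitely many remaining $D$ (namely the fundamental discriminants $D\le 12$: $D=5,8,12$, and also $D=1,4$ corresponding to degenerate cases) by hand. For $D=5$ the graph is an odd cycle $C_5$, not bipartite, and $5$ is odd — consistent. For $D=8$ and $D=12$ the graph is $C_4$, bipartite, and both are even — consistent. This case-check is the main (minor) obstacle; everything else is immediate from the spectral formula.

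Putting it together: for $\Delta>0$ we have shown $-\tfrac12\varphi(D)\in\mathrm{Spec}(P_\Delta)$ exactly when $2\mid D$, i.e. exactly when $\Delta$ is even, and by the bipartiteness criterion this is equivalent to $P_\Delta$ being bipartite. I would also double-check the orientation of the logic once more: when $d=2$ is a proper divisor of $D$, the multiset $\bigl[\tfrac12\frac{\varphi(D)}{\varphi(2)}\mu(2)\bigr]_{\varphi(2)}=\bigl[-\tfrac12\varphi(D)\bigr]_1$ genuinely contributes $-r$ to the spectrum, and when $D$ is odd no term of the first family, nor of the last two, can equal $-r$ (again by the size estimate), so $P_\Delta$ is not bipartite. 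This completes the proof.
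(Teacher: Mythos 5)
Your argument is correct in substance but routes both implications through the spectrum, whereas the paper only uses the spectral criterion for one direction. For ``$\Delta$ even $\Rightarrow$ bipartite'' the paper gives a one-line direct bipartition: split the vertices into even and odd residues; if $u,v$ lie in the same class then $u-v$ is even, so $\gcd(u-v,D)>1$ and $\chi_\Delta(u-v)=0$, hence no edge. Your version instead deduces bipartiteness from the presence of the eigenvalue $-\tfrac12\varphi(D)$ (the $d=2$ term), which invokes the \emph{converse} direction of the criterion ``$-r$ is an eigenvalue iff bipartite''; that direction is only valid for connected graphs, so to be complete you should note that $P_\Delta$ is connected (immediate, since $\chi_\Delta(1)=1$ puts the Hamiltonian cycle $0,1,\dots,D-1$ in the graph). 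For the other implication your structure matches the paper's, but you exclude the eigenvalues $\tfrac12(\pm\sqrt{\Delta}+\mu(D))$ by a size estimate $\sqrt{D}+1<\varphi(D)$, which forces a finite case check you leave somewhat loose (``for all $D\ge 13$ or so''); the paper's exclusion is cleaner and uniform: $-\tfrac12\varphi(D)$ is an integer while $\tfrac12(\pm\sqrt{\Delta}+\mu(D))$ is irrational because $\sqrt{\Delta}$ is irrational for every fundamental discriminant $\Delta>1$. Two small slips to fix: for $D=8$ and $D=12$ the graphs are $C_8$ and $C_{12}$, not $C_4$ (still even cycles, so your conclusion stands), and $D=4$ is not a fundamental discriminant, so it need not be listed among the exceptional cases.
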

\begin{proof}
If $\Delta$ is even then we can decompose $V(P_{\Delta})$ into the following two sets 
\[ V_{\text{odd}} = \{a \in V(G)|  a \equiv 1 \pmod{2} \} , \] 
and 
\[ V_{\text{even}} = \{a \in V(G)|  a \equiv 0 \pmod{2} \} .\] 
If $u,v$ both belong to either $V_{\text{odd}}$ or $V_{\text{even}}$ then $u-v$ is even. Therefore, $\chi(u-v)=0.$ By definition, $u$ and $v$ are not adjacent. We conclude that $P_{\Delta}$ is a bipartite graph. 

Conversely, let us assume that $P_{\Delta}$ is a bipartite graph. By the above remark and the fact that $P_{\Delta}$ is regular of degree $\frac{1}{2} \varphi(D)$, we conclude that one of its eigenvalues must be $-\frac{1}{2} \varphi(D).$ Clearly this eigenvalue cannot be either $\frac{1}{2} (\sqrt{\Delta} + \mu(D))$ or $\frac{1}{2} (-\sqrt{\Delta} + \mu(D))$ because these values are not integers. Consequently, by Theorem \ref{thm:Paley_Ramanujan}, there must exists $d|D$ and $d<D$ such that 

\[ \frac{1}{2} \frac{\varphi(D)}{\varphi(d)} \mu(d) = -\frac{1}{2} \varphi(D) .\] 
Equivalently, we must have $\varphi(d)=-\mu(d) \in \{-1, 1 \}.$ We conclude that $d=2$ and hence $D$ is even. 
\end{proof}

\section{Ramanujan Paley graphs}
In this section, we investigate the following question: which $P_{\Delta}$ is a Ramanujan graph? For this question to make sense, we need to assume that $P_{\Delta}$ is an undirected graph. This requires $\chi_{\Delta}$ is an even character; or equivalently $\Delta>0.$ We first recall the definition of a Ramanujan graph (see \cite{lubotzky1988ramanujan, murty2003}). 

\begin{definition}
\label{def:Ramanujan}
Let $G$ be a connected $r$-regular graph with $N$ vertices, and 
let $r=\displaystyle \lambda _{1}\geq \lambda_{2}\geq \cdots \geq \lambda_{N}$ be the eigenvalues of the adjacency matrix of $G$.  Since $G$ is connected and $r$-regular, its eigenvalues satisfy 
\( |\lambda_i| \leq r, 1 \leq i \leq N.\)
Let 
\[\lambda (G)=\max_{|\lambda_i|<r}|\lambda_{i}| .\]
The graph $G$ is a \textit{Ramanujan graph} if  
\[ \lambda (G)\leq 2{\sqrt {r -1}} .\] 
\end{definition}
It is well-known that $P_\Delta$ is a Ramanujan graph if $\Delta=p$, where $p$ is a prime number, which is congruent to 1 modulo 4. In fact, by Proposition~\ref{prop:degree}, $P_\Delta$ is a regular graph of degree $r:=\dfrac{p-1}{2}$. By Theorem~\ref{thm:spectrum_Paleygraph}, the eigenvalues of the adjacency matrix of $P_\Delta$ which are smaller than $r$ are $\dfrac{\pm\sqrt{p}-1}{2}$. Because $p\equiv 1\pmod 4$ we see that $p\geq 5$ and  $\lambda(P_\Delta)=\dfrac{\sqrt{p}+1}{2}< 2\sqrt{r-1}=2\sqrt{\dfrac{p-3}{2}}$. When $D$ is not a prime number, we have the following.

\begin{lem}
\label{lem:Paley_Ramanujan}
 Suppose $D$ is not a prime number. The graph $P_\Delta$ is a Ramanujan graph if and only if 
 \[
 \dfrac{\varphi(D)}{(p-1)^2}+\dfrac{16}{\varphi(D)}\leq 8,\]
{where $p$ is the smallest odd prime divisor of $D$},
 and
 \[
 \begin{cases}
 \dfrac{\varphi(D)}{D}\geq \dfrac{1}{8}+\dfrac{1}{4\sqrt{D}}+\dfrac{17}{8D} & \text{ if $D$ is odd}\\
 \dfrac{\varphi(D)}{D}\geq \dfrac{1}{8}+\dfrac{2}{D} & \text{ if $D$ is even}
 \end{cases}.
 \]
\end{lem}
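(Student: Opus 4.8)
The plan is to translate the Ramanujan condition $\lambda(P_\Delta)\leq 2\sqrt{r-1}$, with $r=\tfrac12\varphi(D)$, directly into inequalities using the explicit spectrum from Theorem~\ref{thm:spectrum_Paleygraph}. The eigenvalues strictly smaller than $r$ in absolute value fall into two types: the ``Gauss-sum'' eigenvalues $\tfrac12(\pm\sqrt{\Delta}+\mu(D))$, and the ``divisor'' eigenvalues $\tfrac12\frac{\varphi(D)}{\varphi(d)}\mu(d)$ for proper divisors $d\mid D$. First I would show that, among the divisor eigenvalues, the one of largest absolute value is achieved at $d=p$, the smallest odd prime divisor of $D$: indeed $\left|\tfrac12\frac{\varphi(D)}{\varphi(d)}\mu(d)\right|\leq \tfrac12\frac{\varphi(D)}{\varphi(d)}$, and $\varphi(d)$ is minimized over proper divisors $d>1$ by the smallest prime divisor. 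One must be slightly careful when $D$ is even: $d=2$ gives $\varphi(2)=1$ and eigenvalue $-\tfrac12\varphi(D)=-r$, which is excluded from $\lambda(P_\Delta)$ since it corresponds to bipartiteness; so among the \emph{remaining} divisor eigenvalues the extreme one is at $d=p$ (the smallest odd prime), giving absolute value $\tfrac12\frac{\varphi(D)}{p-1}$. Since $D$ is assumed composite, such a proper divisor $d$ with $\varphi(d)>1$ (or the relevant $d=p$) genuinely exists, so this term is really present.

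Next I would compare the two candidate maxima. The Ramanujan inequality becomes the conjunction of
\[
\frac12\cdot\frac{\varphi(D)}{p-1}\leq 2\sqrt{r-1}=2\sqrt{\tfrac12\varphi(D)-1}
\]
and
\[
\frac12\bigl(\sqrt{\Delta}+|\mu(D)|\bigr)\leq 2\sqrt{\tfrac12\varphi(D)-1},
\]
where in the second inequality I use that the larger of $|\tfrac12(\sqrt\Delta+\mu(D))|$ and $|\tfrac12(-\sqrt\Delta+\mu(D))|$ is $\tfrac12(\sqrt\Delta+|\mu(D)|)$ when $\mu(D)\neq 0$, and is $\tfrac12\sqrt\Delta$ when $\mu(D)=0$. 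Squaring the first inequality and clearing denominators yields $\varphi(D)^2\leq 4(p-1)^2(2\varphi(D)-4)$, i.e. $\varphi(D)\leq 8(p-1)^2-\frac{16(p-1)^2}{\varphi(D)}$, which rearranges exactly to $\frac{\varphi(D)}{(p-1)^2}+\frac{16}{\varphi(D)}\leq 8$. Squaring the second inequality gives $\Delta+2|\mu(D)|\sqrt\Delta+\mu(D)^2\leq 8\varphi(D)-16$; now I split on the parity of $D$, using $\Delta=D$ when $D\equiv 1\pmod 4$ (so $D$ odd) and $\Delta=4D$ when $D$ even (here $D\equiv 2,3\pmod 4$, but since $D$ is a conductor $D\not\equiv 0\pmod 4$ is automatic). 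For $D$ odd with $\sqrt\Delta=\sqrt D$ and $|\mu(D)|\leq 1$ this becomes $D+2\sqrt D+1\geq$ is handled by $D+2\sqrt D+1\le 8\varphi(D)-16$, i.e. dividing by $8D$ gives $\frac{\varphi(D)}{D}\geq\frac18+\frac1{4\sqrt D}+\frac{1}{8D}+\frac{16}{8D}=\frac18+\frac1{4\sqrt D}+\frac{17}{8D}$; for $D$ even, $\mu(D)^2\in\{0,1\}$ and $2|\mu(D)|\sqrt\Delta$ term: since $D$ even and squarefree-or-not, if $4\mid\Delta$ and $D$ even then $D$ could have $\mu(D)=0$ or $\pm1$; bounding $\mu(D)^2+2|\mu(D)|\cdot 2\sqrt D\le $ the odd-$D$ style term is wasteful, so instead I use the cleaner bound giving $4D+(\text{lower order})\le 8\varphi(D)-16$, dividing by $8D$: $\frac{\varphi(D)}{D}\ge\frac12+\cdots$ — wait, this forces me to be careful, and in fact for $D$ even $\Delta=4D$ makes $\sqrt\Delta=2\sqrt D$ and the stated bound $\frac{\varphi(D)}{D}\ge\frac18+\frac2D$ must come from a different arrangement; the resolution is that for $D$ even one in fact needs $4D + 4|\mu(D)|\sqrt D + \mu(D)^2 \le 8\varphi(D) - 16$, but since $D$ even forces $2\mid D$ hence $\varphi(D)\le D/2$ hmm — so I should double check whether the binding eigenvalue for even $D$ is really the Gauss-sum one or whether $\Delta=4D$ is even correct here, and this is the delicate bookkeeping step.

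The main obstacle, then, is precisely this even/odd case analysis and getting the constants in the second displayed inequality to match: one has to correctly identify $\Delta$ in terms of $D$ in each case, decide whether $\mu(D)$ contributes $0$ or $\pm 1$, pick the eigenvalue of genuinely largest absolute value (including checking that no divisor eigenvalue other than at $d=p$ dominates, and that the excluded eigenvalue $-r$ doesn't sneak back in), and then verify the algebraic rearrangement of the squared inequalities produces exactly the clean forms in the statement. The first inequality $\frac{\varphi(D)}{(p-1)^2}+\frac{16}{\varphi(D)}\le 8$ is routine once $d=p$ is identified as extremal; the content is showing these two (families of) eigenvalue bounds are \emph{together} equivalent to the Ramanujan property, which requires knowing $2\sqrt{r-1}$ is real and positive — true since $D$ composite gives $\varphi(D)\ge 2$, with equality only for $D\in\{3,4,6\}$ which must be excluded or checked separately — and checking no other eigenvalue can exceed it. I would close by remarking that the forward and reverse implications are symmetric here since we have an exact description of the whole spectrum, so the ``if and only if'' follows with no extra work once the extremal eigenvalues are pinned down.
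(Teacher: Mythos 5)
Your overall strategy is the same as the paper's: read the Ramanujan condition off the explicit spectrum, with the divisor eigenvalues controlled by the smallest odd prime $p$ (after discarding the $d=2$ eigenvalue $-r$, which is excluded from $\lambda(G)$) and the Gauss-sum eigenvalues giving the second family of inequalities. The divisor part and the odd-$D$ computation are correct. However, the even-$D$ case — which you yourself flag as ``the delicate bookkeeping step'' and leave unresolved — rests on a misidentification: you set $\Delta=4D$ for even $D$, whereas in the paper's setup $D$ is by definition $|\Delta|$; it is $m$, the squarefree integer generating the quadratic field, that satisfies $\Delta=4m$ when $m\equiv 2,3\pmod 4$. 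Since $\Delta>0$ throughout this section, $\sqrt{\Delta}=\sqrt{D}$ in \emph{all} cases, and an even conductor automatically satisfies $4\mid D$ (so your parenthetical claim that ``$D\not\equiv 0\pmod 4$ is automatic'' is exactly backwards). As written, your argument therefore does not establish the even branch of the lemma, so the gap is genuine.

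It is, fortunately, easily repaired. Once $\sqrt{\Delta}=\sqrt{D}$ is in place, the dichotomy in the lemma comes entirely from $\mu(D)$: for even $D$ one has $4\mid D$, hence $\mu(D)=0$, the extremal Gauss-sum eigenvalues are $\pm\tfrac{1}{2}\sqrt{D}$, and squaring $\tfrac{1}{2}\sqrt{D}\le 2\sqrt{\tfrac{1}{2}\varphi(D)-1}$ gives $D\le 8\varphi(D)-16$, i.e.\ $\tfrac{\varphi(D)}{D}\ge\tfrac{1}{8}+\tfrac{2}{D}$; for odd $D$ (necessarily squarefree, so $\mu(D)=\pm 1$) the extremal Gauss-sum eigenvalue has absolute value $\tfrac{1}{2}(\sqrt{D}+1)$, giving the other branch exactly as you computed. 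One further small caveat: for $D=8$ there is no odd prime divisor at all, so the first inequality of the lemma is vacuous there; the paper sidesteps this by treating $D=8$ separately in Theorem \ref{thm:Paley_Ramanujan}, and your appeal to ``$D$ composite'' does not by itself guarantee that the divisor $d=p$ exists.
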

  \begin{proof}
Let $d$ be an arbitrary divisor of $D$ such that $1<d<D$ and $d\not=2$ if $4\mid D$. One has
  \[
  \dfrac{\varphi(D)}{2\varphi(d)}\leq 2\sqrt{\dfrac{\varphi(D)}{2}-1}
  \Leftrightarrow  \dfrac{\varphi(D)^2}{\varphi(d)^2}\leq 8\varphi(D)-16 \Leftrightarrow  \dfrac{\varphi(D)}{\varphi(d)^2}+ \dfrac{16}{\varphi(D)}\leq 8.
  \]
  Clearly if $d$ has an odd prime divisor then $\dfrac{\varphi(D)}{\varphi(d)^2}+ \dfrac{16}{\varphi(D)}\leq \dfrac{\varphi(D)}{(p-1)^2}+ \dfrac{16}{\varphi(D)}$, where $p$ is the smallest odd prime divisor of $D$.
  
  One also has
  \[
  \dfrac{\sqrt{D}+1}{2}\leq 2\sqrt{\dfrac{\varphi(D)}{2}-1}
  \Leftrightarrow (1+\sqrt{D})^2\leq 8\varphi(D)-16
  \Leftrightarrow \dfrac{\varphi(D)}{D}\geq \dfrac{1}{8}+\dfrac{1}{4\sqrt{D}}+\dfrac{17}{8D},
  \]
  and
  \[
  \dfrac{\sqrt{D}}{2}\leq 2\sqrt{\dfrac{\varphi(D)}{2}-1}
  \Leftrightarrow D\leq 8\varphi(D)-16\Leftrightarrow 
  \dfrac{\varphi(D)}{D}\geq \dfrac{1}{8}+\dfrac{2}{D}.
  \]
  The lemma now follows from Theorem~\ref{thm:spectrum_Paleygraph}.
  \end{proof}
\begin{thm}
\label{thm:Paley_Ramanujan}
The graph $P_\Delta$ is a Ramanujan graph if and only if
\begin{enumerate}
    \item either     $D=8$, 
    \item or $D=4p$, where $p$ is a prime number, $p\equiv 3\pmod 4$,
    \item or $D=8p$, where $p$ is an odd prime number,
    \item  or $D=4p_1p_2$ where $p_1$ and $p_2$ are primes, $p_1p_2\equiv 3\pmod 4$ and $p_1<p_2\leq 4p_1-5$.
    \item or $D=8p_1p_2$ where $p_1$ and $p_2$ are  primes and $2<p_1<p_2\leq 2p_1-3$, 
          \item or  $D$ is a prime number $p$ with $p\equiv 1\pmod 4$,
      
      \item or $D=p_1p_2$ where $p_1$ and $p_2$ are  primes, $p_1p_2\equiv 1\pmod 4$ and $p_1<p_2\leq 8p_1-9$.
\end{enumerate}
\end{thm}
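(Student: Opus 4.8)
The plan is to carry out a case analysis on the arithmetic shape of the fundamental discriminant $\Delta=D>0$, using Lemma~\ref{lem:Paley_Ramanujan} once $D$ is composite and the direct spectral computation preceding that lemma when $D$ is prime. The input is the classification of positive fundamental discriminants: every such $D$ is of exactly one of the types (a) $D$ odd, squarefree, $D\equiv 1\pmod 4$; (b) $D=4m$ with $m$ odd, squarefree, $m\equiv 3\pmod 4$; or (c) $D=8m$ with $m$ odd, squarefree. The point that makes the lemma usable is that for all these $D$ the $2$-part of $D$ is $1$, $4$, or $8$ and the odd part is squarefree; so, writing $p_1<p_2<\cdots$ for the odd primes dividing $D$ and $k$ for their number, both inequalities in Lemma~\ref{lem:Paley_Ramanujan} become fully explicit in $p_1,p_2,\dots$. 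I would then run through $k=0,1,2$ and finally show $k\geq 3$ is impossible.

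For $k=0$ the only possibility is $D=8$ (type (c), $m=1$); here $P_8=C_8$ by Corollary~\ref{cor:cycle}, and Theorem~\ref{thm:spectrum_Paleygraph} gives $\lambda(P_8)=\sqrt 2<2=2\sqrt{r-1}$, so we land in case (1). For $k=1$: if $D=p$ is prime then $D\equiv 1\pmod 4$ and this is precisely the prime case treated in the text, giving (6); if $D=4p$ then necessarily $p\equiv 3\pmod 4$, one has $\varphi(D)=2(p-1)$, and a short computation shows both inequalities of Lemma~\ref{lem:Paley_Ramanujan} reduce to conditions holding for every odd prime $p$, so all such $D$ are Ramanujan --- case (2); the case $D=8p$ is identical with $\varphi(D)=4(p-1)$, and again both inequalities always hold --- case (3).

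The substance of the proof is $k=2$. Writing $D=c\,p_1p_2$ with $c\in\{1,4,8\}$ and $\varphi(D)=\varphi(c)(p_1-1)(p_2-1)$, the first inequality of Lemma~\ref{lem:Paley_Ramanujan} (with smallest odd prime $p=p_1$), after clearing denominators, becomes a quadratic inequality in $p_2$ whose solution set is an interval; since $p_2$ is prime it cannot equal the (even) upper endpoint, and the inequality turns out to be equivalent to $p_2\leq 8p_1-9$ when $c=1$, to $p_2\leq 4p_1-5$ when $c=4$, and to $p_2\leq 2p_1-3$ when $c=8$ (this last forcing $p_1\geq 5$). One then checks that in each of these three ranges the second inequality of the lemma is automatic, because $\varphi(D)/D$ is bounded below by an absolute constant exceeding the right-hand side as soon as $p_1\geq 3$. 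Finally imposing that $D$ be a genuine fundamental discriminant yields the congruence conditions: $p_1p_2\equiv 1\pmod 4$ for $c=1$ (case (7)), $p_1p_2\equiv 3\pmod 4$ for $c=4$ (case (4)), and no extra condition for $c=8$ (case (5)).

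It remains to exclude $k\geq 3$. Then $\varphi(D)\geq(p_1-1)(p_2-1)(p_3-1)$ with $p_2\geq p_1+2$ and $p_3\geq p_1+4$, so $\frac{\varphi(D)}{(p_1-1)^2}\geq\frac{(p_2-1)(p_3-1)}{p_1-1}\geq\frac{(p_1+1)(p_1+3)}{p_1-1}>8$, which already violates the first inequality of Lemma~\ref{lem:Paley_Ramanujan}; hence $P_\Delta$ is not Ramanujan. Putting $k=0,1,2$ together with this non-existence gives exactly the seven families. The main obstacle I anticipate is purely organizational: keeping the three discriminant types and their congruence restrictions straight, and --- the one genuinely error-prone spot --- carefully verifying that the second inequality of Lemma~\ref{lem:Paley_Ramanujan} is subsumed by the first in each of the $k=2$ subcases; everything else is elementary inequality manipulation, together with the remark (already used before the lemma, cf.\ Proposition~\ref{prop:degree}) that $P_\Delta$ is $\tfrac12\varphi(D)$-regular.
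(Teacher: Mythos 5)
Your proposal is correct and follows essentially the same route as the paper: reduce to Lemma~\ref{lem:Paley_Ramanujan}, case-split on the shape of $D$ ($1$, $4$ or $8$ times an odd squarefree number) and on the number $k$ of odd prime divisors, rule out $k\geq 3$ (your uniform bound $\frac{(p_1+1)(p_1+3)}{p_1-1}>8$ is in fact slightly cleaner than the paper's separate treatment of $k=3$ in the odd case), and solve the resulting inequalities for $k\leq 2$. The one spot where the paper does visibly more work than your sketch is the passage from $8(p_1-1)-(p_2-1)\geq \frac{16}{p_2-1}$ to $p_2\leq 8p_1-9$ (and its analogues for $c=4,8$), which needs either the hypothesis $p_2\geq 17$ together with a finite check of the pairs $(3,7),(3,11),(7,11),(5,13)$, or --- in your quadratic-interval formulation --- a careful evaluation of $\bigl\lfloor 4(p_1-1)+4\sqrt{(p_1-1)^2-1}\bigr\rfloor$ for small $p_1$; your sketch elides this edge case, but the argument does go through.
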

\begin{proof}
{\bf a) We first consider the case that $D$ is even.} It is easy to check that $P_8$ is a Ramanujan graph. So we suppose that  $D\not=8$ and $P_\Delta$ is a Ramanujan graph. Let $D=2^{a}p_1^{a_1}\cdots p_k^{a_k}$  be the prime factorization of $D$ with $2<p_1<\cdots<p_k$. One has    $a\in\{2,3\}$, $k\geq 1$, and $a_2=\cdots=a_k=1$.
By Lemma \ref{lem:Paley_Ramanujan},
\[
8\geq  \dfrac{\varphi(D)}{(p_1-1)^2}+\dfrac{16}{\varphi(D)}>\dfrac{\varphi(D)}{(p_1-1)^2}=2^{a-1}\dfrac{p_2-1}{p_1-1}\cdots (p_k-1).
\]
This inequality forces $k\leq 2$. In fact, if $k\geq 3$ then
\[
2^{a-1}\dfrac{p_2-1}{p_1-1}\cdots (p_k-1)>2(p_3-1)\geq 2(7-1)=12>8.
\]

\noindent\textbf{Case 1: $k=2$.}
In this case $D=2^{a}p_1p_2$, $a\in\{2,3\}$. 
One has
\[
 \dfrac{\varphi(D)}{D}\geq \dfrac{1}{8}+\dfrac{2}{D}
 \Leftrightarrow  \dfrac{3}{4}\geq \dfrac{1}{p_1}+\dfrac{1}{p_2}+\dfrac{2^{2-a}-1}{p_1p_2}.
\]
The last inequality is true because
\[
\dfrac{1}{p_1}+\dfrac{1}{p_2}+\dfrac{2^{2-a}-1}{p_1p_2} \leq \dfrac{1}{3}+\dfrac{1}{5}<\dfrac{3}{4}.
\]
\noindent\textbf{Subcase 1.1: $a=2$ ($k=2$).} In this case $D=4p_1p_2$, where $p_1<p_2$ and $p_1p_2\equiv 3\pmod 4$. By Lemma~\ref{lem:Paley_Ramanujan}, the graph $P_\Delta$ is a Ramanujan graph if and only if 
\[
\begin{aligned}
\dfrac{\varphi(D)}{(p_1-1)^2}+\dfrac{16}{\varphi(D)}\leq 8 &\Leftrightarrow   \dfrac{p_2-1}{p_1-1}+\dfrac{4}{(p_1-1)(p_2-1)}\leq 4\\
&\Leftrightarrow 4(p_1-1)-(p_2-1)\geq \dfrac{4}{p_2-1}.
\end{aligned}
\]
In the case that $p_2\geq 5$, the latter condition is equivalent to 
\[
4(p_1-1)-(p_2-1)\geq 1,
\]
which is equivalent to
\[
p_2\leq 4p_1-5.
\]

\noindent\textbf{Subcase 1.2: $a=3$ ($k=2$).} In this case $D=8p_1p_2$, where $2<p_1<p_2$. By Lemma~\ref{lem:Paley_Ramanujan}, the graph $P_\Delta$ is a Ramanujan graph if and only if 
\[
\begin{aligned}
\dfrac{\varphi(D)}{(p_1-1)^2}+\dfrac{16}{\varphi(D)}\leq 8 &\Leftrightarrow   \dfrac{p_2-1}{p_1-1}+\dfrac{1}{(p_1-1)(p_2-1)}\leq 2\\
&\Leftrightarrow 2(p_1-1)-(p_2-1)\geq \dfrac{1}{p_2-1}.
\end{aligned}
\]
 The latter condition is equivalent to 
 \[
 2(p_1-1)-(p_2-1)\geq 1,
 \]
 which is equivalent to
\[
p_2\leq 2p_1-3.
\]

\noindent\textbf{Case 2: $k=1$.}
In this case $D=2^{a}p_1$, $a\in\{2,3\}$. 
One has
\[
 \dfrac{\varphi(D)}{D}\geq \dfrac{1}{8}+\dfrac{2}{D}
 \Leftrightarrow  \dfrac{3}{4}\geq \dfrac{1+2^{2-a}}{p_1}.
\]
The last inequality is true because
\[
\dfrac{1+2^{2-a}}{p_1} \leq \dfrac{2}{3}<\dfrac{3}{4}.
\]
Note also that
\[
\dfrac{\varphi(D)}{(p_1-1)^2}+\dfrac{16}{\varphi(D)}=\dfrac{2^{a-1}+2^{5-a}}{p_1-1}\leq \dfrac{10}{3-2}=5<8.
\]

{\bf b) Now we consider the case $D$ is odd.} 
It is well known that if $\Delta$ is a prime number then $P_\Delta$ is a Ramanujan graph.(For example, see the discussion after Definition~\ref{def:Ramanujan}.)

Suppose that $D$ is not a prime number and that $P_\Delta$ is a Ramanujan graph. Let  $D=p_1^{a_1}p_2^{a_2}\cdots p_k^{a_k}$ be the prime factorization of $D$ with $2<p_1<p_2<\cdots<p_k$. Since $d$ is square-free, $a_1=\cdots=a_k=1$, and $k\geq 2$. By Lemma \ref{lem:Paley_Ramanujan},
\[
8\geq  \dfrac{\varphi(D)}{(p_1-1)^2}+\dfrac{16}{\varphi(D)}>\dfrac{p_2-1}{p_1-1}\cdots (p_k-1).
\]
This inequality forces $k\leq 3$. In fact, if $k\geq 4$ then
\[
\dfrac{p_2-1}{p_1-1}\cdots (p_k-1)>(p_3-1)(p_4-1)\geq (5-1)(7-1)=24>8.
\]

\noindent\textbf{Case 1: $k=3$.} Since $8>\dfrac{p_2-1}{p_1-1}(p_3-1)$, we conclude that $p_3=7$.  Hence  $(p_1,p_2)=(3,5)$. However, in this case, one has 
$\dfrac{p_2-1}{p_1-1}(p_3-1)>8$, a contradiction.

\noindent\textbf{Case 2: $k=2$.}
In this case $D=p_1p_2$, where $p_1<p_2$ and $p_1p_2\equiv 1\pmod 4$. One has
\[
 \dfrac{\varphi(D)}{D}\geq \dfrac{1}{8}+\dfrac{1}{4\sqrt{D}}+\dfrac{17}{8D}
 \Leftrightarrow  \dfrac{7}{8}\geq \dfrac{1}{p_1}+\dfrac{1}{p_2}+\dfrac{1}{4\sqrt{p_1p_2}}+\dfrac{4}{p_1p_2}.
\]
The last inequality is true because
\[
\dfrac{1}{p_1}+\dfrac{1}{p_2}+\dfrac{1}{4\sqrt{p_1p_2}}+\dfrac{4}{p_1p_2}\leq \dfrac{1}{3}+\dfrac{1}{5}+\dfrac{1}{4\sqrt{15}}+\dfrac{4}{15}<\dfrac{7}{8}.
\]

By Lemma~\ref{lem:Paley_Ramanujan}, the graph $P_\Delta$ is a Ramanujan graph if and only if 
\[
\begin{aligned}
\dfrac{\varphi(D)}{(p_1-1)^2}+\dfrac{16}{\varphi(D)}\leq 8 &\Leftrightarrow   \dfrac{p_2-1}{p_1-1}+\dfrac{16}{(p_1-1)(p_2-1)}\leq 8\\
&\Leftrightarrow 8(p_1-1)-(p_2-1)\geq \dfrac{16}{p_2-1}.
\end{aligned}
\]
In the case that $p_2\geq 17$, the latter condition is clearly equivalent to 
\[
8(p_1-1)-(p_2-1)\geq 1,
\]
which is equivalent to
\[
p_2\leq 8p_1-9.
\]
All pairs of primes $(p_1,p_2)$ satisfying that $p_1<p_2<17$ and $p_1p_2\equiv 1\pmod 4$ are $(3,7)$, $(3,11)$, $(7,11)$ and $(5,13)$. One can check directly that for such a pair $(p_1,p_2)$ we always have
\[
8(p_1-1)-(p_2-1)\geq \dfrac{16}{p_2-1} \text{ and } p_2\leq 8p_1-9. 
\qedhere
\]
\end{proof}
\begin{rem}
There are infinitely pairs of primes $(p_1,p_2)$ satisfying the  conditions in part (4)  of  Theorem \ref{thm:Paley_Ramanujan}. In fact, let us first recall the prime number theorem for arithmetic progressions. Let $a$, $d$ be two positive integers with $\gcd(a,d)=1$. Let 
\[
\pi(x;a,d)=|\{p \text{ prime }\mid p\leq x, p\equiv a\pmod d \}|.
\]
Then $\pi(x;a,d)\sim \dfrac{1}{\varphi(d)}\dfrac{x}{\ln x}$ as $x\to \infty$. (See for example \cite[page 257]{leveque2002topics}.) Hence for any fixed number $k>1$,
\[
\pi(kx;a,d)-\pi(x;a,d)=|\{p \text{ prime }\mid x<p\leq kx,\; p\equiv a\pmod d \}|\sim \dfrac{k-1}{\varphi(d)}\dfrac{x}{\ln x} \text{ as } x\to \infty.
\]
In particular for $x$ big enough, there exists a prime $p$ such that 
\[
x<p\leq kx,\quad p\equiv a\pmod d.
\]
Now let $p_1>5$ be a prime number congruent to 1 modulo 4  which is big enough so that there exists a prime $p_2$ such that
\[
p_1<p_2\leq 3p_1,\quad p\equiv 3\pmod 4.
\]
Then $p_1p_2\equiv 3\pmod 4$ and $p_2<4p_1-5$. Hence the pair $(p_1,p_2)$ satisfies the conditions in part (4) of Theorem~\ref{thm:Paley_Ramanujan}.

Similarly, there are infinitely pairs of primes $(p_1,p_2)$ satisfying the  conditions in part (5), or in part (7),  of  Theorem \ref{thm:Paley_Ramanujan}.
\end{rem}

\section{Cheeger number of generalized Paley graphs}
Let $\chi=\chi_{\Delta}$ be an even quadratic character (in other words, $\Delta>0$) and $P_{\Delta}$ the corresponding generalized Paley graph. In this section, we provide an upper bound for the Cheeger number of $P_{\Delta}$ (also known by other names such as isoperimetric number or edge expansion ratio). Our estimation gives a natural generalization of the results in \cite{cramer2016isoperimetric}. We also answer the authors' suspicion about the relationship between the upper bounds that they used in this paper, namely the $\alpha$-bound and the $\dfrac{p-1}{4}$-bound (see \cite[Section 2.2]{cramer2016isoperimetric}.) While our result is more general and explicit, we remark that our approach is inspired by the method in \cite{cramer2016isoperimetric}.

First, let us recall the definition of the Cheeger number of a graph. Let $G=(E,V)$ be an undirected graph. Let $F$ be a subset of $V$.  For a subset  $F \subseteq V$, the boundary of $F$, denoted by $\partial{F}$, is the set of all edges going from a vertex in $F$ to a vertex outside of $F$. The Cheeger number of $G$ is defined as 
\[ h(G) :=\min \left\{ \frac{|\partial{F}|}{|F|} \middle\vert\ F \subseteq V(G),0<|F|\leq {\tfrac {1}{2}}|V(G)| \right\}. \]

Cheeger number is of great importance in various scientific fields. For example, it has found applications in graph clustering, analysis of Markov chain, and image segmentation (see \cite{jerrum2004polynomial, kannan2004clusterings, sinclair1989approximate, spielman1996spectral}.)

Let $\{\alpha_1, \alpha_2, \ldots, \alpha_k \}$ be the set of all elements on the interval $[1, \ldots, \lfloor \frac{D}{2}]]$ such that $\chi(\alpha_i)=1.$ Note that since $\chi$ is even, $\chi(D-a)=\chi(a)$. Therefore, we can conclude that
\[ \{\alpha_1, D- \alpha_1, \alpha_2, D - \alpha_2, \ldots, \alpha_k, D -\alpha_k \} , \]
is the set of $a \in [0,D]$ such that $\chi(a)=1.$ In particular, we see that $k=\dfrac{\varphi(D)}{4}.$

The following proposition is a direct analog of \cite[Proposition 6]{cramer2016isoperimetric}.
\begin{prop}
Let $F=\{0, 1, \ldots, \lfloor \frac{D}{2} \rfloor -1 \} \subset V(P_{\Delta})$. Then $|F|=\lfloor \frac{D}{2} \rfloor$ and 
\[ |\partial{F}| = 2 \sum_{i=1}^k \alpha_i .\]  
\end{prop}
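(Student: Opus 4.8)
The plan is to reduce everything to a clean interval-counting identity on $\Z/D$. The first equality $|F|=\lfloor D/2\rfloor$ is immediate from the definition of $F$, so all the content is in computing $|\partial F|$. Write $m:=\lfloor D/2\rfloor$, so that $F=\{0,1,\ldots,m-1\}$ and its complement is $F^c=\{m,m+1,\ldots,D-1\}$. Recall that $P_\Delta$ is the circulant graph on $\Z/D$ with connection set $S:=\{a\in\{1,\ldots,D-1\}\mid \chi(a)=1\}$, and that, as recalled just before the proposition, $S=\{\alpha_1,D-\alpha_1,\ldots,\alpha_k,D-\alpha_k\}$. Sending an edge of $\partial F$ to the pair consisting of its endpoint $x\in F$ together with the element $s\in S$ for which $(x+s)\bmod D$ is its endpoint in $F^c$ defines a bijection between $\partial F$ and $\{(x,s)\mid x\in F,\ s\in S,\ (x+s)\bmod D\in F^c\}$; it is a bijection precisely because $F$ and $F^c$ are disjoint, so the $F$-endpoint and the $F^c$-endpoint of an edge of $\partial F$ are unambiguous. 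Hence $|\partial F|=\sum_{s\in S}N(s)$, where $N(s):=\#\{x\in\{0,\ldots,m-1\}\mid (x+s)\bmod D\in\{m,\ldots,D-1\}\}$.

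The heart of the argument is an exact evaluation of $N(s)$. First I would observe that wraparound never lands in $F^c$: if $x\le m-1$ and $x+s\ge D$, then $(x+s)-D\le (m-1)+(D-1)-D=m-2<m$, so $(x+s)\bmod D\notin F^c$. Consequently only those $x$ with $x+s\le D-1$ contribute, and for those $(x+s)\bmod D=x+s$, giving
\[
N(s)=\#\{x\mid \max(0,m-s)\le x\le \min(m-1,D-1-s)\}.
\]
Next I would specialize to the two families of values occurring in $S$. For $s=\alpha_i$, since $1\le \alpha_i\le m$ one checks $\max(0,m-\alpha_i)=m-\alpha_i$ and $\min(m-1,D-1-\alpha_i)=m-1$, hence $N(\alpha_i)=\alpha_i$. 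For $s=D-\alpha_i$, one uses that $\alpha_i\ne D/2$ — which holds because $\chi(D/2)=0$ when $D$ is even, while $\alpha_i\le \lfloor D/2\rfloor<D/2$ automatically when $D$ is odd — to deduce $D-\alpha_i>m$, so that $\max(0,m-(D-\alpha_i))=0$ and $\min(m-1,\alpha_i-1)=\alpha_i-1$, whence $N(D-\alpha_i)=\alpha_i$ as well. Summing over $S$ then gives $|\partial F|=\sum_{i=1}^k\bigl(N(\alpha_i)+N(D-\alpha_i)\bigr)=\sum_{i=1}^k 2\alpha_i=2\sum_{i=1}^k\alpha_i$, as claimed.

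The main obstacle here is nothing deep, only the bookkeeping around the parity of $D$: one must keep the floor/ceiling straight and, crucially, verify that $\alpha_i$ and $D-\alpha_i$ are genuinely distinct elements of $\Z/D$ (equivalently $\alpha_i\ne D/2$), since this is exactly what makes $|S|=2k$ and what places the evaluation of $N(D-\alpha_i)$ in the regime $D-\alpha_i>m$. Everything else is elementary interval counting. A quick numerical check — for instance $D=21$, where $S=\{1,4,5,16,17,20\}$ and $\{\alpha_i\}=\{1,4,5\}$, so that $|\partial F|=1+4+5+5+4+1=20=2(1+4+5)$ — confirms the identity.
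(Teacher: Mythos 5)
Your argument is correct and follows essentially the same route as the paper's proof: decompose $\partial F$ according to the connection-set element $s\in\{\alpha_1,D-\alpha_1,\ldots,\alpha_k,D-\alpha_k\}$ and count, for each $s$, the $j\in F$ with $(j+s)\bmod D\notin F$, obtaining $\alpha_i$ contributions from each of $\alpha_i$ and $D-\alpha_i$. You simply make explicit the interval bookkeeping (the no-wraparound check and the observation that $\alpha_i\neq D/2$) that the paper leaves implicit in the phrase ``Because $\alpha\leq\lfloor D/2\rfloor$ \ldots exactly $\alpha_i$.''
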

\begin{proof}
By definition, an element of $\partial{F}$ will have the form $(i, i+a)$ where $i \in F$, $i+a \not \in F$, and $\chi(a)=1.$ In particular, we know that 
\[ a \in \{\alpha_1, D- \alpha_1, \alpha_2, D - \alpha_2, \ldots, \alpha_k, D -\alpha_k \} .\] 
Let us fix an index $i$ where $1 \leq i \leq \dfrac{\varphi(D)}{4}.$ The number of edges of the form $(j, j+\alpha_i)$ is equal to the number of $j \in F$ such that 
\begin{equation} \label{eq:first_inequality}
    (j+\alpha_i) \pmod{D} \geq \lfloor \frac{D}{2} \rfloor.
\end{equation} 
Because $\alpha \leq \lfloor \frac{D}{2} \rfloor$, we conclude that the number of $j \in F$ satisfying the inequality \ref{eq:first_inequality} is exactly $\alpha_i.$ Similarly, the number of edges of the form $(j, j+D-\alpha_i)$ is equal to the number of $j \in F$ such that 
\begin{equation} \label{eq:second_inequality}
    (j+D-\alpha_i) \pmod{D} =(j-\alpha_i) \pmod{D} \geq \lfloor \frac{D}{2} \rfloor 
\end{equation} 
Again, the number of $j$ satisfying the inequality \ref{eq:second_inequality} is exactly $\alpha_i$. Summing up over all $i$, we have 

\[ |\partial{F}| = 2 \sum_{i=1}^k \alpha_i .
\qedhere\]  
\end{proof}

By definition, we conclude that 
\[ h(P_{\Delta}) \leq \dfrac{2}{\lfloor D/2\rfloor}  \sum_{i=1}^k \alpha_i .\] 
We can further simplify the right-hand side of the above estimate using zeta values. In order to do so, we first recall the definition of the $L$-function $L(\chi, s)$ attached to $\chi$ 
\[ L(\chi,s)= \sum_{n=1}^{\infty} \frac{\chi(n)}{n^s} .\] 
This $L$-function is convergent for $s \in \C$ such that $\Re(s) >0$ and it is absolutely convergent if $\Re(s)>1.$ Furthermore, there is an Euler product formula
\[ L(\chi, s) = \prod_{p} \dfrac{1}{1-\chi(p)p^{-s}} ,\] 
where $p$ runs over the set of all prime numbers. This Euler product formula shows that  $L(\chi,s)>0$ if $s \in \R$ and $s>1.$ We are now ready to state the following theorem.

\begin{thm} 
\[
h(P_\Delta)\leq \alpha:=
\dfrac{1}{8\lfloor D/2\rfloor}\left( D\varphi(D)-\mu(D) \varphi(D)-\dfrac{8D\sqrt{D}}{\pi^2}\left(1-\dfrac{\chi(2)}{4}\right)L(2,\chi)\right).
\]

\end{thm}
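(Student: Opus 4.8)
The plan is to turn the inequality $h(P_{\Delta})\le \dfrac{2}{\lfloor D/2\rfloor}\sum_{i=1}^{k}\alpha_i$ obtained just above into the closed form $\alpha$, by evaluating $\sum_{i=1}^{k}\alpha_i$ exactly. Since $\tfrac12\chi(a)(1+\chi(a))$ is the indicator of the condition $\chi(a)=1$, we have $\sum_{i=1}^{k}\alpha_i=\sum_{a=1}^{\lfloor D/2\rfloor}a\cdot\tfrac12(\chi(a)+\chi(a)^2)=\tfrac12(U+V)$, where $U:=\sum_{a=1}^{\lfloor D/2\rfloor}a\,\chi(a)$ and $V:=\sum_{1\le a\le\lfloor D/2\rfloor,\ \gcd(a,D)=1}a$. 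So it suffices to compute $V$, which will produce the term $D\varphi(D)-\mu(D)\varphi(D)$, and $U$, which will produce the $L(2,\chi)$ term.

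For $V$ I would expand by inclusion--exclusion over the squarefree divisors $d$ of $D$:
\[ V=\sum_{d\mid\mathrm{rad}(D)}\mu(d)\,d\cdot\frac{m_d(m_d+1)}{2},\qquad m_d:=\left\lfloor\frac{D}{2d}\right\rfloor. \]
Because $D$ is a fundamental discriminant, its $2$-adic valuation is $0$, $2$, or $3$, so there are only two cases: either $D$ is odd (hence squarefree), in which case $D/d$ is odd and $m_d=\tfrac12(D/d-1)$; or $4\mid D$, in which case $2d\mid D$ and $m_d=D/(2d)$. Substituting $m_d$ in each case and using $\sum_{d\mid\mathrm{rad}(D)}\mu(d)/d=\varphi(D)/D$, $\sum_{d\mid\mathrm{rad}(D)}\mu(d)=0$, and $\sum_{d\mid\mathrm{rad}(D)}\mu(d)\,d=\mu(D)\varphi(D)$ when $D$ is squarefree (while $\mu(D)=0$ when $4\mid D$), a short computation yields in both cases $V=\tfrac18\varphi(D)(D-\mu(D))$.

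For $U$ I would first pass to a full-period sum. Since $\chi$ is even, pairing $a$ with $D-a$ (and noting $\chi(D/2)=0$ when $D$ is even) gives $\sum_{a=1}^{\lfloor D/2\rfloor}\chi(a)=0$, whence
\[ \sum_{a=1}^{D-1}\chi(a)\,|2a-D|=2\sum_{a=1}^{\lfloor D/2\rfloor}\chi(a)(D-2a)=-4U. \]
The key step is to evaluate the left-hand side by Fourier analysis. Let $g$ be the $1$-periodic function with $g(x)=|2x-1|$ for $x\in[0,1]$ (the triangle wave); it is continuous, with the classical expansion $g(x)=\tfrac12+\tfrac{4}{\pi^2}\sum_{m\text{ odd}}m^{-2}\cos(2\pi mx)$, so $|2a-D|=D\,g(a/D)=\tfrac D2+\tfrac{4D}{\pi^2}\sum_{m\text{ odd}}m^{-2}\cos(2\pi ma/D)$ for $1\le a\le D-1$. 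Summing against $\chi(a)$, using $\sum_{a=1}^{D-1}\chi(a)=0$ together with the Gauss-sum identity $\sum_{a=1}^{D-1}\chi(a)\cos(2\pi ma/D)=\Re\,G(m,\chi)=\chi(m)\sqrt{D}$ (valid for every $m$, since $G(m,\chi)=\chi(m)G(1,\chi)=\chi(m)\sqrt{\Delta}$ and $\Delta=D>0$), and finally peeling off the even terms via $\sum_{m\text{ even}}\chi(m)m^{-2}=\tfrac{\chi(2)}{4}L(2,\chi)$, we obtain
\[ \sum_{a=1}^{D-1}\chi(a)\,|2a-D|=\frac{4D\sqrt{D}}{\pi^2}\sum_{m\text{ odd}}\frac{\chi(m)}{m^2}=\frac{4D\sqrt{D}}{\pi^2}\left(1-\frac{\chi(2)}{4}\right)L(2,\chi), \]
hence $U=-\dfrac{D\sqrt{D}}{\pi^2}\left(1-\dfrac{\chi(2)}{4}\right)L(2,\chi)$. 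Substituting the values of $U$ and $V$ into $\sum_{i=1}^{k}\alpha_i=\tfrac12(U+V)$ and multiplying by $\tfrac{2}{\lfloor D/2\rfloor}$ produces exactly the stated bound $h(P_{\Delta})\le\alpha$.

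In this plan the two Dirichlet-series manipulations and the inclusion--exclusion count are routine; the single substantive ingredient is the Fourier step — recognizing that $\sum_a\chi(a)|2a-D|$ is the $\chi$-twist of a triangle wave, and hence, through $G(m,\chi)=\chi(m)\sqrt{D}$, a rational multiple of $(1-\chi(2)/4)L(2,\chi)$. The most delicate bookkeeping is the floor-function case split in the computation of $V$, which closes cleanly precisely because a fundamental discriminant never has $2$-adic valuation equal to $1$.
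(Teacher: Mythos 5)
Your argument is correct, and it follows the paper's decomposition exactly: write $2\sum_{i=1}^k\alpha_i$ as the sum $V=\sum_{a\le\lfloor D/2\rfloor,\ \gcd(a,D)=1}a$ plus $U=\sum_{a\le\lfloor D/2\rfloor}\chi(a)a$, then evaluate each in closed form. Where you diverge is that the paper simply cites both closed forms --- $U=-\tfrac{D\sqrt D}{\pi^2}\bigl(1-\tfrac{\chi(2)}{4}\bigr)L(2,\chi)$ from Berndt's classical theorem and $V=\tfrac18\bigl(D\varphi(D)-\mu(D)\varphi(D)\bigr)$ from Baum's theorem on sums of reduced residues --- whereas you prove both from scratch. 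Your M\"obius inclusion--exclusion for $V$ is right, and the case split on $v_2(D)\in\{0,2,3\}$ (forced by $D$ being a fundamental discriminant) does close cleanly in both branches; note that the paper still has to make essentially the same observation when converting Baum's $\epsilon\psi(D)$ into $\mu(D)\varphi(D)$. Your derivation of $U$ via the triangle-wave expansion $|2a-D|=\tfrac D2+\tfrac{4D}{\pi^2}\sum_{m\ \mathrm{odd}}m^{-2}\cos(2\pi ma/D)$, the primitivity identity $G(m,\chi)=\chi(m)\sqrt{D}$ (valid for all $m$, which is what lets you sum over every odd $m$), and the peeling of even indices into $\tfrac{\chi(2)}{4}L(2,\chi)$ is a correct proof of the Berndt identity; the interchange of the finite sum over $a$ with the Fourier series is justified by the uniform convergence of the triangle wave's expansion. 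What your route buys is self-containedness and a transparent explanation of where the factor $1-\chi(2)/4$ comes from; what the paper's route buys is brevity. Either way the final division by $\lfloor D/2\rfloor$ gives the stated $\alpha$.
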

\begin{proof} One has
\[
2\sum_{i=1}^k \alpha_i =\sum_{a=1, \gcd(a,D)=1}^{\lfloor D/2\rfloor} (1+\chi(a))a=\sum_{a=1, \gcd(a,D)=1}^{\lfloor D/2\rfloor} a +  \sum_{a=1}^{\lfloor D/2\rfloor}\chi(a)a.
\]

By \cite[Theorem 13.1]{berndt1976classical}, 
\[
\sum_{a=1}^{\lfloor D/2\rfloor}\chi(a)a =-\dfrac{D\sqrt{D}}{\pi^2}\left(1-\dfrac{\chi(2)}{4}\right)L(2,\chi).
\]

By \cite[Theorem]{Baum1982}, 
\[
\sum_{a=1, \gcd(a,D)=1}^{\lfloor D/2\rfloor} a=\dfrac{1}{8}(D\varphi(D)-\epsilon\psi(D)),
\]
where $\epsilon=1$ if $D$ is odd and $\epsilon=0$ if $D$ is even; and $\psi(D)=\prod\limits_{p\mid D}(1-p)$. Note that in the case that $D$ is odd, $D$ is then square-free, hence $\psi(D)=\mu(D)\varphi(D)$. Also if $D$ is even then $D$ is divisible by $4$, hence $\mu(D)=0$.
Thus
\[
\sum_{a=1, \gcd(a,D)=1}^{\lfloor D/2\rfloor} a=\dfrac{1}{8}(D\varphi(D)-\mu(D)\varphi(D)).
\]
Therefore
\[
\alpha:=\dfrac{2\sum\limits_{i=1}^k\alpha_i}{\lfloor D/2\rfloor}=\dfrac{1}{8\lfloor D/2\rfloor}\left( D\varphi(D)-\mu(D) \varphi(D)-\dfrac{8D\sqrt{D}}{\pi^2}\left(1-\dfrac{\chi(2)}{4}\right)L(2,\chi)\right).
\]
\end{proof}

\begin{rem} Let the notation be as above. If 
 $D$ is even, then
\[
\alpha= \dfrac{\varphi(D)}{4}-\dfrac{2\sqrt{D}}{\pi^2}\left(1-\dfrac{\chi(2)}{4}\right)L(2,\chi).
\]

If  $D$ is odd, then
\[
\begin{aligned}
\alpha&= \dfrac{1}{4(D-1)}\left( D\varphi(D)- \mu(D)\varphi(D)-\dfrac{8D\sqrt{D}}{\pi^2}\left(1-\dfrac{\chi(2)}{4}\right)L(2,\chi)\right)\\
&= \dfrac{\varphi(D)}{4}-\dfrac{1}{4(D-1)}\left(\dfrac{8D\sqrt{D}}{\pi^2}(1-\dfrac{\chi(2)}{4})L(2,\chi)-(1-\mu(D))\varphi(D)\right). 
\end{aligned}
\]
\end{rem}

\begin{cor} \label{cor:estimate} Let the notation be as above. Then 
$\alpha<\dfrac{\varphi(D)}{4}$.
\end{cor}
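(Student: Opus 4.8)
The plan is to start from the two explicit formulas for $\alpha$ recorded in the preceding Remark and, in each case, to reduce the inequality $\alpha<\varphi(D)/4$ to the positivity of a single manifestly positive quantity. Recall from the Euler product that $L(2,\chi)>0$, and that $1-\chi(2)/4>0$ since $\chi(2)\in\{-1,0,1\}$; hence the quantity $\tfrac{2\sqrt D}{\pi^2}\bigl(1-\tfrac{\chi(2)}{4}\bigr)L(2,\chi)$ is strictly positive.

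First I would dispatch the even case. There the Remark gives $\alpha=\dfrac{\varphi(D)}{4}-\dfrac{2\sqrt D}{\pi^2}\bigl(1-\tfrac{\chi(2)}{4}\bigr)L(2,\chi)$, and subtracting a strictly positive number from $\varphi(D)/4$ immediately yields $\alpha<\varphi(D)/4$. No further work is needed.

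Next I would handle the odd case, which is the only place any real estimation occurs and hence the main (mild) obstacle. Here the Remark gives
\[
\alpha=\dfrac{\varphi(D)}{4}-\dfrac{1}{4(D-1)}\left(\dfrac{8D\sqrt{D}}{\pi^2}\Bigl(1-\tfrac{\chi(2)}{4}\Bigr)L(2,\chi)-(1-\mu(D))\varphi(D)\right),
\]
so it suffices to show the bracketed expression is strictly positive, i.e.
\[
\dfrac{8D\sqrt{D}}{\pi^2}\Bigl(1-\tfrac{\chi(2)}{4}\Bigr)L(2,\chi)>(1-\mu(D))\varphi(D).
\]
Since $D$ is odd, $\chi(2)=\pm1$, so $1-\tfrac{\chi(2)}{4}\geq \tfrac34$, while $(1-\mu(D))\varphi(D)\leq 2\varphi(D)\leq 2D$ because $\mu(D)\in\{-1,1\}$ and $\varphi(D)\leq D$. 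Thus it is enough to prove $\dfrac{8D\sqrt D}{\pi^2}\cdot\dfrac34\cdot L(2,\chi)>2D$, i.e. $L(2,\chi)>\dfrac{\pi^2}{3\sqrt D}$. I would obtain a lower bound for $L(2,\chi)$ from the Euler product $L(2,\chi)=\prod_p\bigl(1-\chi(p)p^{-2}\bigr)^{-1}\geq \prod_p\bigl(1+p^{-2}\bigr)^{-1}=\dfrac{1}{\zeta(2)}\prod_p\bigl(1-p^{-4}\bigr)=\dfrac{\zeta(4)}{\zeta(2)^2}=\dfrac{2}{5}$, a universal positive constant. Then $L(2,\chi)>\dfrac{\pi^2}{3\sqrt D}$ holds as soon as $\sqrt D>\dfrac{5\pi^2}{6}$, i.e. for all $D$ beyond an explicit small bound; the finitely many remaining odd conductors (note the smallest odd conductor that is not prime is $15$, and primes are handled trivially since then $\mu(D)=-1$... wait, $\mu(p)=-1$ so $1-\mu(p)=2$, so primes are \emph{not} automatically trivial and must be included in the finite check) would be verified directly by computing $L(2,\chi)$ numerically or, more cleanly, by using the sharper bound $L(2,\chi)\geq 1-\sum_{n\geq 2}n^{-2}=2-\pi^2/6>0.35$ together with $\varphi(D)\leq D-1$ for $D$ composite and $\varphi(p)=p-1$, to close the gap for all $D\geq 3$. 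I would organize the write-up so that the even case is one line, and the odd case is the inequality $L(2,\chi)>\pi^2/(3\sqrt D)$ reduced to an explicit Euler-product lower bound, with any residual small cases checked by hand.
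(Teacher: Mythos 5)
Your overall strategy is sound and, in outline, the same as the paper's: the even case is the one\-/line observation that $L(2,\chi)>0$, and the odd case reduces to showing
$\dfrac{8D\sqrt{D}}{\pi^2}\bigl(1-\tfrac{\chi(2)}{4}\bigr)L(2,\chi)>(1-\mu(D))\varphi(D)$,
which you attack by a uniform lower bound on $L(2,\chi)$ plus a finite check. Where you genuinely differ is in that lower bound: the paper splits on the sign of $\chi(2)$ and truncates the Dirichlet series, using $L(2,\chi)\geq 1+\tfrac14-\sum_{n\geq 3}n^{-2}>\tfrac56$ when $\chi(2)=1$ and $L(2,\chi)\geq 1-\sum_{n\geq 2}n^{-2}>\tfrac13$ when $\chi(2)=-1$ (checking $D=5,13$ by hand and assuming $D\geq 17$), whereas you use the character-independent Euler-product bound $L(2,\chi)\geq\prod_p(1+p^{-2})^{-1}$. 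Your route is cleaner in that it avoids the case split, and the inequality $L(2,\chi)\geq\prod_p(1+p^{-2})^{-1}$ is correct factor by factor.

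Three points need repair, though. First, the evaluation of the constant is wrong: $\prod_p(1+p^{-2})^{-1}=\zeta(4)/\zeta(2)=\pi^2/15\approx 0.658$, not $\zeta(4)/\zeta(2)^2=2/5$ (and the intermediate identity $\prod_p(1+p^{-2})^{-1}=\zeta(2)^{-1}\prod_p(1-p^{-4})$ is also not right). You are saved only because $2/5<\pi^2/15$, so the stated inequality $L(2,\chi)\geq 2/5$ happens to remain true; with the correct constant your threshold $L(2,\chi)>\pi^2/(3\sqrt D)$ improves from $D>(5\pi^2/6)^2\approx 67.7$ to $D>25$, shrinking the residual set to $D\in\{5,13,17,21\}$. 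Second, the proposed ``more clean'' fallback is backwards: $2-\pi^2/6\approx 0.355$ is \emph{weaker} than $2/5$, so it cannot ``close the gap for all $D\geq 3$''; that sentence should simply be cut. Third, the finite check --- roughly a dozen odd fundamental discriminants with your constant, four with the corrected one --- is asserted but never carried out, and it is the only place where actual information about $L(2,\chi)$ beyond the universal bound enters; it must be done explicitly (as the paper does for $D=5,13$) before the argument is complete.
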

\begin{proof}
If $D$ is even then the statement follows from the previous corollary and the fact that $L(2,\chi)>0$.
So we suppose that $D$ is odd. One can check that for $D=5$ or $D=13$ then the statement is true. So we suppose further that $D\geq 17$. 

\noindent {\bf Case 1: } $\chi(2)=1$. In this case, 
\[
L(2,\chi)=\sum_{n=1}^\infty\dfrac{\chi(n)}{n^2}\geq 1+\dfrac{1}{2^2}-\sum_{n=3}^\infty\dfrac{1}{n^2}=\dfrac{5}{2}-\dfrac{\pi^2}{6}>\dfrac{5}{6}.
\]

Hence
\[
\begin{aligned}
\dfrac{8D\sqrt{D}}{\pi^2}(1-\dfrac{\chi(2)}{4})L(2,\chi)-(1-\mu(D))\varphi(D)>\dfrac{8D\sqrt{D}}{\pi^2}
\end{aligned}\cdot \dfrac{3}{4}\cdot\dfrac{5}{6}-2D>0.
\]
Thus $\alpha<\dfrac{\varphi(D)}{4}$.

\noindent {\bf Case 2: } $\chi(2)=1$. In this case, 
\[
L(2,\chi)=\sum_{n=1}^\infty\dfrac{\chi(n)}{n^2}\geq 1-\sum_{n=2}^\infty\dfrac{1}{n^2}=2-\dfrac{\pi^2}{6}>\dfrac{1}{3}.
\]

Hence
\[
\begin{aligned}
\dfrac{8D\sqrt{D}}{\pi^2}(1-\dfrac{\chi(2)}{4})L(2,\chi)-(1-\mu(D))\varphi(D)>\dfrac{8D\sqrt{D}}{\pi^2}
\end{aligned}\cdot \dfrac{5}{4}\cdot\dfrac{1}{3}-2D>0.
\]
Thus $\alpha<\dfrac{\varphi(D)}{4}$.
\end{proof}

In the particular case when $D$ is a prime of the form $4k+1$, Corollary \ref{cor:estimate} shows that the $\alpha$-bound discussed in \cite{cramer2016isoperimetric} is better than the $\dfrac{p-1}{4}$-bound.

\begin{rem}
It is interesting to see whether the $\alpha$-bound provides the optimal value for $h(P_{\Delta})$; namely $h(P_{\Delta})=\alpha.$ This is true in the case $P_{\Delta}$ is a cycle graph (by Corollary \ref{cor:cycle}, this happens if $\Delta \in \{5,8, 12 \}$). The reason is that in this case $\alpha=\dfrac{4}{D}$ if $D \in \{8,12\}$ and $\alpha=\dfrac{4}{D-1}$ if $D=5$ which is precisely the Cheeger number of $P_{\Delta}$ (by the result in \cite{krebs2011expander}, it is known  that for a cycle graph of  order $n$ its Cheeger number is $\dfrac{4}{n}$ if $n$ is even and $\dfrac{4}{n-1}$ if $n$ is odd.) 
\end{rem}
\section*{Data Availability Statement} Data sharing not applicable to this article as no datasets were generated or analysed during the current study.
\section*{Acknowledgements} The third named author is very grateful to Professor Moshe Rosenfeld who kindled his interest in using number theory to attack problems in graph theory and combinatorics.  

\bibliographystyle{abbrv}
\bibliography{references.bib}

\end{document}